\documentclass[a4paper,11pt,reqno,noindent]{amsart}
\usepackage[centertags]{amsmath}
\usepackage{amsfonts,amssymb,amsthm,dsfont,cases,amscd,esint,enumerate, stmaryrd}
\usepackage[T1]{fontenc}
\usepackage[english]{babel}
\usepackage[applemac]{inputenc}
\usepackage{newlfont}
\usepackage{color}
\usepackage[body={17cm,21.5cm}, top = 1in, bottom = 1.5in, centering]{geometry} 
\usepackage{fancyhdr}
\usepackage{enumerate}
\usepackage{comment}

\theoremstyle{plain}
\newtheorem{theor}{Theorem}[section]
\newtheorem{lem}[theor]{Lemma}
\newtheorem{prop}[theor]{Proposition}

\newtheorem{rem}[theor]{Remark}
\theoremstyle{definition}

\mathchardef\emptyset="001F
\numberwithin{equation}{section}

\newcommand{\R}{\mathbb R}

\newcommand{\T}{\mathbb T}
\newcommand{\Dd}{{\mathbb D}}
\newcommand{\e}{\varepsilon}
\newcommand{\Pc}{\mathcal{P}}

\newcommand{\Id}{\operatorname{Id}}

\newcommand{\Tr}{\operatorname{Tr}}
\newcommand{\loc}{{\operatorname{loc}}}

\usepackage[colorlinks,citecolor=black,urlcolor=black]{hyperref}
\usepackage{tikz}
\usetikzlibrary{fit}
\usetikzlibrary{shapes.geometric}
\usetikzlibrary{calc}

\usepackage[colorlinks,citecolor=black,urlcolor=black]{hyperref}

\title[Singular mean-field limits for fluctuations around equilibrium]{Singular mean-field limits for fluctuations\\around equilibrium}

\author[M.~Duerinckx]{Mitia Duerinckx}
\address{\sc M. Duerinckx. Universit\'e Libre de Bruxelles, D\'epartement de Ma\-th\'e\-matiques, Brussels, B-1050, Belgium}
\email{mitia.duerinckx@ulb.be}
\author[P.-E. Jabin]{Pierre-Emmanuel Jabin}
\address[Pierre-Emmanuel Jabin]{Penn State University, Department of Mathematics, State College, PA 16802, USA}
\email{pejabin@psu.edu}

\begin{document}

\begin{abstract}
This work addresses the mean-field limit of inertial particle systems with singular interactions in a perturbative regime around Gibbs equilibrium. We prove that small fluctuations around equilibrium are asymptotically governed by the linearized Vlasov equation. The result applies to a broad class of singular interaction kernels, including the Coulomb case in dimensions $d\le3$. In particular, this provides a rigorous derivation of the linearized mean-field dynamics near equilibrium in settings where the corresponding nonlinear mean-field limit remains out of reach.
\end{abstract}

\maketitle
\setcounter{tocdepth}{1}
\tableofcontents
\allowdisplaybreaks

\section{Introduction}
The derivation of the Vlasov equation as a mean-field limit for inertial particle systems with singular interactions remains a major open problem beyond the case of $L^2$ interaction forces recently treated in~\cite{BDJ-24}. In particular, even the two-dimensional Coulomb case remains open.

In this work, we address a perturbative version of this problem around equilibrium. More precisely, we study the evolution of small fluctuations around the $N$-particle Gibbs equilibrium and prove that, for a large class of singular interactions including the Coulomb case in dimensions $d\le3$, the limiting dynamics is governed by the linearized Vlasov equation.
We thus identify the first variation of the dynamics around equilibrium in situations where the nonlinear mean-field limit is still out of reach.

\medskip
Consider a system of $N$ exchangeable particles evolving according to Newtonian dynamics with pairwise interactions. For simplicity, we work on the periodic torus $\T^d$ in dimension $d\ge1$; the extension to the whole space with a confining potential is discussed in Appendix~\ref{app:Rd}. Denoting by $(X_{N,j},V_{N,j})$ the position and velocity of particle $j$ in phase space $\Dd:=\T^d\times\R^d$, the dynamics reads
\begin{equation*}
\left\{\begin{array}{l}
\tfrac{d}{dt}X_{N,j}=V_{N,j},\\[1mm]
\tfrac{d}{dt}V_{N,j}=\frac1N\sum_{l:l\ne j}K(X_{N,j}-X_{N,l}),\qquad 1\le j\le N,
\end{array}\right.
\end{equation*}
where $K=-\nabla W$ for some even potential $W:\T^d\to\R$.
At the level of the $N$-particle distribution function $F_N$ on $\Dd^N$, the dynamics formally leads to the Liouville equation
\begin{equation}\label{eq:Liouville}
\partial_tF_N+\sum_{j=1}^Nv_j\cdot\nabla_{x_j}F_N+\frac1N\sum_{j\ne l}^NK(x_j-x_l)\cdot\nabla_{v_j}F_N=0.
\end{equation}
We shall study this equation close to its Gibbs equilibrium
\[M_{N,\beta}(z_1,\ldots,z_N):=Z_{N,\beta}^{-1}\exp\bigg(-\frac\beta2\sum_{j=1}^N|v_j|^2-\frac\beta{2N}\sum_{j\ne l}^NW(x_j-x_l)\bigg),\]
where $\beta>0$ is the inverse temperature and $z_j=(x_j,v_j)$. The one-particle marginal of this equilibrium is the Maxwellian
\[M_\beta(z):=Z_\beta^{-1}e^{-\frac\beta2|v|^2}.\]
Motivated by chaotic initial data, one may start from initial densities of the form
\[M_{N,\beta}(1+\delta f_\circ)^{\otimes N},\]
for some bounded perturbation $f_\circ:\Dd\to\R$. In the regime of vanishing fluctuations, $\delta\ll N^{-1}$, expanding the product, subtracting the equilibrium, and rescaling by $\delta^{-1}$, this leads to considering (signed) initial data of the form
\begin{equation}\label{eq:initial}
F_N|_{t=0}=M_{N,\beta}\sum_{j=1}^Nf_\circ(z_j),\qquad f_\circ\in L^\infty(\Dd),\qquad \int_\Dd f_\circ M_\beta=0.
\end{equation}
In the sequel, we consider arbitrary bounded weak solutions $F_N$ of~\eqref{eq:Liouville} with initial data~\eqref{eq:initial} satisfying the natural conservation estimate
\begin{equation}\label{eq:conserv}
\int_{\Dd^N}\frac{|F_N(t)|^q}{|M_{N,\beta}|^{q-1}}\le\int_{\Dd^N}\frac{|F_N^\circ|^q}{|M_{N,\beta}|^{q-1}}\quad\text{for all $t\ge0$ and $1\le q<\infty$.}
\end{equation}
This estimate is the weak form of the formal conservation of the $L^q(M_{N,\beta})$-norm of $F_N/M_{N,\beta}$ along the Hamiltonian flow. Such bounded weak solutions exist globally, for instance, whenever $K\in L^1(\T^d)^d$ and $f_\circ\in L^\infty(\Dd)$.

\medskip
Our main result identifies the limiting dynamics of these fluctuation densities: under suitable assumptions on the interaction kernel, the marginals of~$F_N$ are asymptotically governed by the linearized Vlasov equation. In particular, the result covers the 2D Coulomb case at arbitrary temperature, and the 3D Coulomb case at sufficiently high temperatures.

\begin{theor}\label{th:main}
Let the interaction kernel be given by $K=-\nabla W$, for some even potential $W\in L^{2,\infty}(\T^d)$ with bounded negative part $W_-\in L^\infty(\T^d)$, and assume $K\in L^p(\T^d)$ for some~$p>\frac{4}{3}$.
Then there exists~$\beta_0>0$ such that the following holds for all $\beta\in(0,\beta_0)$.
In the special case of the repulsive logarithmic kernel $\hat W(\xi)=|\xi|^{-d}\mathds1_{\xi\ne0}$, one can take $\beta_0=\infty$.

Let $F_N$ be a global bounded weak solution to~\eqref{eq:Liouville} with initial data~\eqref{eq:initial} such that~\eqref{eq:conserv} holds. For every $m\ge1$, the $m$-particle marginal
\begin{equation}\label{eq:marginalFNm}
F_{N,m}(t,z_1,\ldots,z_m)
=\int_{\Dd^{N-m}}F_N(t,z_1,\ldots,z_N)\,dz_{m+1}\cdots dz_N
\end{equation}
converges, as $N\uparrow\infty$, in the sense of distributions on $\R^+\times\Dd^m$ to
\[
\sum_{j=1}^m f(t,z_j)\,\prod_{l:l\ne j}^mM_\beta(z_l),
\]
where $f$ is the unique weak solution of the linearized Vlasov equation
\begin{equation}\label{eq:Vlasov}
\left\{
\begin{array}{l}
\partial_tf+v\cdot\nabla_x f+(K\ast f)\cdot\nabla_v M_\beta=0,\\
f|_{t=0}=f_\circ.
\end{array}
\right.
\end{equation}
\end{theor}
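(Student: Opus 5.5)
We work with the relative density $G_N:=F_N/M_{N,\beta}$. Since $M_{N,\beta}$ is stationary for the Liouville flow, $G_N$ is transported by the Hamiltonian vector field, and~\eqref{eq:conserv} gives uniform bounds on $\|G_N(t)\|_{L^q(M_{N,\beta})}$. At $t=0$, $G_N=\sum_j f_\circ(z_j)$ has $L^2(M_{N,\beta})$-norm of order $\sqrt N$ once the correlations of the Gibbs measure are controlled. This is the natural scale: the marginals $F_{N,m}$ are $O(1)$ after this normalization.

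\textbf{Step 1: Gibbs correlation estimates.} Using $W\in L^{2,\infty}$, $W_-\in L^\infty$ and $\beta$ small (or the logarithmic structure, with $\hat W\ge0$, which gives stability at all temperatures), we establish cluster-type estimates for the marginals of $M_{N,\beta}$. Concretely, the $m$-point correlation functions are $M_\beta^{\otimes m}(1+O(N^{-1}))$ in a suitable weak sense. From these we derive a bound $\int_{\Dd^N} |\sum_j g(z_j)|^2 M_{N,\beta}\lesssim N\|g\|^2$ for centered $g$, uniformly in $N$.

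\textbf{Step 2: BBGKY hierarchy and projection.} We write the hierarchy for $F_{N,m}$. The equation for $F_{N,1}$ involves $\int K(x_1-x_2)F_{N,2}\,dz_2$. We decompose $F_{N,2}$ into the "linear" part $f(z_1)M_\beta(z_2)+M_\beta(z_1)f(z_2)$ plus a correlation remainder. Rather than closing the hierarchy, we use a duality/projection argument. We test $G_N$ against linear observables $\Phi_N(z)=\sum_j\phi(z_j)$ and note that the Liouville generator $L_N$ acting on $\Phi_N$ gives $\sum_j v_j\cdot\nabla_x\phi(z_j)$ plus the force term. Using the antisymmetry of $L_N$ in $L^2(M_{N,\beta})$, we move the force term onto the linear span. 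Via Gaussian integration by parts in $v$, the force term becomes $\beta\frac1N\sum_{j\ne l}K(x_j-x_l)\cdot v_j\,\phi$-type terms, which project onto the linear space as $\sum_j (K\ast\cdot)$ applied to the velocity moment. This yields the linearized Vlasov generator up to an error given by the projection, in $L^2(M_{N,\beta})$, of a quadratic (two-particle) function onto the orthogonal complement of the linear space.

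\textbf{Step 3: the main obstacle.} The hard step is showing that this orthogonal remainder $R_N(t)$, tested against $G_N$, vanishes as $N\to\infty$ with only $K\in L^p$, $p>4/3$. We would bound $\langle G_N,R_N\rangle$ by Cauchy--Schwarz in $L^2(M_{N,\beta})$, with $\|G_N\|\lesssim\sqrt N$. This requires $\|R_N\|_{L^2(M_{N,\beta})}=o(N^{-1/2})$ after normalization. The remainder is a U-statistic $\frac1N\sum_{j\ne l}h(z_j,z_l)$ with $h$ built from $K(x_j-x_l)\cdot v_j\phi$ and its linear projection removed. Its variance is roughly $N^{-2}\cdot N^2\|h\|_{L^2}^2$ for the degenerate part, so we must exploit the gain: we need $h\in L^2$ in the two-particle Gibbs measure. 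With $K\in L^p$ only, we instead use $L^q$ bounds on $G_N$ from~\eqref{eq:conserv} with $q$ large and Hölder, $K\in L^p$ with $p>4/3$, together with a higher-moment (Rosenthal-type) estimate for degenerate U-statistics under $M_{N,\beta}$ from Step 1. The threshold $4/3$ should come out of balancing these exponents. If direct estimates fail, we would truncate $K=K_\varepsilon+(K-K_\varepsilon)$: the smooth part is handled as above, and the singular part is controlled by $\|K-K_\varepsilon\|_{L^p}$ times uniform bounds.

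\textbf{Step 4: conclusion.} The estimates give compactness of $F_{N,1}/M_\beta$ weakly. Any limit satisfies the weak linearized Vlasov equation~\eqref{eq:Vlasov}, whose weak solutions are unique in $L^\infty_tL^2(M_\beta^{-1})$ by an energy estimate: $K\ast f$ is bounded for $f\in L^2$, using $K\in L^1$. Higher marginals are identified similarly: $F_{N,m}$ minus the symmetrized linear term is controlled by the same orthogonal-projection bound, using the Step 1 correlation estimates to replace Gibbs marginals by $M_\beta^{\otimes m}$.
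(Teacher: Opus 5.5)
\textbf{There is a genuine gap, and it sits exactly where the paper does its main work.}

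Your Steps~3--4 need control in $L^q$ with $q>2$. Since $K\in L^p$ with $p<2$, Cauchy--Schwarz in $L^2(M_{N,\beta})$ fails, and you need three things:
\begin{enumerate}
\item[(a)] a bound $\|G_N(t)\|_{L^{p'}}\lesssim N^{1/2}$;
\item[(b)] an $L^p$ moment bound for the singular degenerate $U$-statistic $R_N$;
\item[(c)] for higher marginals, a Hoeffding-type orthogonality.
\end{enumerate}
Conservation~\eqref{eq:conserv} only gives bounds relative to $M_{N,\beta}$, which is not a product measure. Under it, neither Rosenthal/von Bahr--Esseen inequalities for $U$-statistics nor the identity $\sum_m\binom Nm\|H_{N,m}\|^2=\|F_N\|^2$ are available.

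Your Step~1 cannot supply this. No method is given. Even granting ``$m$-point correlations $=M_\beta^{\otimes m}(1+O(N^{-1}))$ in a weak sense'' and a variance bound, you control only second moments of regular linear statistics. You get nothing about $L^{p'}$ moments of $\sum_jf_\circ(z_j)$. Already for fourth moments, a bare $O(N^{-1})$ error in $\rho_4$ gives $O(N^3)$ instead of $O(N^2)$. Nor do you get $L^p$ moments of $\frac1N\sum_{j\ne l}K(x_j-x_l)\cdot\nabla_v\phi(z_j)$ under $M_{N,\beta}$, which depend on the full $N$-body density near collisions.

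The missing ingredient is the comparison of $M_{N,\beta}$ with $M_\beta^{\otimes N}$ through the spatial partition function, namely the identity $\int|M_{N,\beta}|^r/|M_\beta^{\otimes N}|^{r-1}=\bar Z_{N,r\beta}/\bar Z_{N,\beta}^r$. Theorem~\ref{th:partition} bounds $\bar Z_{N,\beta}$ in two regimes:
\begin{enumerate}
\item[(i)] uniformly for $W\in L^2$, via the Mayer expansion and the Penrose tree-graph identity, with $\int h=0$ killing bare trees;
\item[(ii)] by $CN^{C\beta^2}$ for $W\in L^{2,\infty}$, by interpolation.
\end{enumerate}
H\"older then transfers the conserved norms to $\int|F_N|^q/|M_\beta^{\otimes N}|^{q-1}\lesssim N^{q/2+C\beta^2}$ (Lemma~\ref{lem:apriori-Liouv}). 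After that, everything is done under the product measure, where independence is available. In the critical case this comparison costs a factor $N^{C\beta^2}$, which must be absorbed by taking $\beta$ small. This is where $\beta_0$ in the statement comes from, and your claim of uniform-in-$N$ estimates in Step~1 does not account for it.

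\textbf{With that input, your duality route does close}, and it is a legitimate alternative to the paper's interpolation. Use the Hoeffding decomposition relative to $M_\beta^{\otimes 2}$, which is what your Gaussian integration by parts actually computes, not an $L^2(M_{N,\beta})$-orthogonal projection. The linear part gives the linearized Vlasov term. With $h(z_1,z_2)=K(x_1-x_2)\cdot\nabla_v\phi(z_1)$, the remainder is
\[\frac1{N^2}\int_{\Dd^N}F_N\sum_{j\ne l}h^{\mathrm{deg}}(z_j,z_l)=\frac{N-1}N\int_{\Dd^2}H_{N,2}(z_1,z_2)\,K(x_1-x_2)\cdot\nabla_v\phi(z_1)\,dz_1dz_2.\]
Two bounds combine by H\"older:
\begin{enumerate}
\item[(1)] $\|F_N/M_\beta^{\otimes N}\|_{L^{p'}(M_\beta^{\otimes N})}\lesssim N^{1/2+C\beta^2}$;
\item[(2)] the martingale (Burkholder/von Bahr--Esseen) bound $\|\sum_{j\ne l}h^{\mathrm{deg}}(z_j,z_l)\|_{L^p(M_\beta^{\otimes N})}\lesssim N^{2/p}\|K\|_{L^p}\|\nabla_v\phi\|_{L^\infty}$ for $1<p\le2$.
\end{enumerate}
Together they give $N^{2/p-3/2+C\beta^2}$, which vanishes exactly when $p>\frac43$ and $\beta$ is small. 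This is the dual of the paper's bound $\|K\|_{L^p}\|H_{N,2}\|_{L^{p'}}$, where $\|H_{N,2}\|_{L^{p'}}$ is obtained by interpolating the $L^2$ Hoeffding bound with the crude $L^q$ bound. Your version avoids the $\e$-loss of that interpolation.

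\textbf{Minor point.} In Step~4, $K\ast f$ is not bounded for $f\in L^2(1/M_\beta)$ and $K\in L^1$. Only $K\ast\rho_f\in L^2$ holds, with $\rho_f=\int f\,dv$. The energy estimate still closes by Young's inequality.
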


\begin{rem}[High-temperature restriction]
As explained in the proof, the above result extends to arbitrary inverse temperatures~$\beta<\infty$ provided that the spatial Gibbs partition function
\[\bar Z_{N,\beta}=\frac1{|\T^d|^N}\int_{(\T^d)^N}\exp\Big(-\frac{\beta}{2N}\sum_{j\ne l}^N W(x_j-x_l)\Big)\,dx_1\cdots dx_N\]
remains uniformly bounded in the mean-field limit, that is, after the normalization $\int_{\T^d}W=0$,
\begin{equation}\label{eq:bound-Z}
\limsup_{N\uparrow\infty} \bar Z_{N,\beta}<\infty.
\end{equation}

In Theorem~\ref{th:partition}(i) below, we prove that this property~\eqref{eq:bound-Z} holds for sufficiently small~$\beta$ whenever $W\in L^2(\T^d)$ and $W_-\in L^\infty(\T^d)$. Conversely, Corollary~\ref{cor:equ-fl} shows that the condition $W\in L^2(\T^d)$ is necessary for~\eqref{eq:bound-Z}. It also shows that the smallness restriction on~$\beta$ cannot in general be removed either unless the potential is $H$-stable, in the sense that $\widehat{W}\ge0$.

This leaves open the natural question whether~\eqref{eq:bound-Z} holds for all $\beta<\infty$ for every $H$-stable potential $W\in L^2(\T^d)$. To our knowledge, this is not known in general. The case of repulsive logarithmic interactions was recently settled in~\cite{2506.22083}. For more general repulsive Riesz interactions, the best available bounds~\cite{Rougerie-Serfaty,Petrache-Serfaty} yield only $\log \bar Z_{N,\beta}=O(N^{s/d})$, where $s\in(0,d)$ is the Riesz exponent. Such diverging bounds are expected to be optimal only in the much lower-temperature regime $\beta=O(N)$, cf.~\cite{Rosenzweig-pers}.
\end{rem}

\section{Partition function estimates}\label{sec:partition}
For convenience, assume that the torus $\T^d$ has unit volume and that the interaction potential $W$ is centered, $\int_{\T^d}W=0$. We consider the spatial Gibbs partition function
\begin{equation}\label{eq:partition-def}
\bar Z_{N,\beta}:=\int_{(\T^d)^N}
\exp\Big(-\frac{\beta}{2N}\sum_{i\neq j}^NW(x_i-x_j)\Big)\,dx_1\ldots dx_N.
\end{equation}
The purpose of this section is to establish bounds on $\bar Z_{N,\beta}$ in the mean-field limit $N\uparrow\infty$. Such bounds will play a central role in the sequel.
Indeed, the weighted $L^p$-norm of the density of the interacting Gibbs equilibrium $M_{N,\beta}$ with respect to the product mean-field equilibrium $M_\beta^{\otimes N}$ is given by
\begin{equation}\label{eq:moment-MN}
\int_{\Dd^N}\frac{|M_{N,\beta}|^{p}}{|M_\beta^{\otimes N}|^{p-1}}=\frac{\bar Z_{N,p\beta}}{(\bar Z_{N,\beta})^p},
\end{equation}
see also~\cite{DSR-21}. Uniform control of $\bar Z_{N,\beta}$ therefore prevents the Gibbs equilibrium from concentrating on configurations that are atypical under the product mean-field measure, thus providing a strong form of statistical stability for the equilibrium mean-field approximation.

We prove below a collection of high-temperature estimates. In the case $W\in L^2(\T^d)$, we obtain a uniform bound on the partition function. In the critical case $W\in L^{2,\infty}(\T^d)$, we obtain a polynomial bound in $N$. For more singular Riesz-type interactions, we obtain stretched-exponential bounds.
As will be explained later, these estimates are expected to be sharp.

During the completion of this work, we learned that the uniform estimate for $L^2$ interactions had been obtained independently by M. Rosenzweig~\cite{Rosenzweig-pers}, by a different argument based on concentration estimates for canonical $U$-statistics, in particular on~\cite[Theorem~3.3]{Gine-Latala-Zinn-00}. Our proof is instead based on a direct cluster-expansion argument. The bounds for $W\notin L^2(\T^d)$ are then deduced by interpolation.

\begin{theor}[High-temperature bound]\label{th:partition}
Let $W\in L^{1}(\T^d)$ be even and centered, $\int_{\T^d}W=0$, with $W_-\in L^\infty(\T^d)$, and assume that
\[\beta\Big(\|W\|_{L^1(\T^d)}+\|W_-\|_{L^\infty(\T^d)}\Big)\ll1\]
is smaller than some universal constant. Then the spatial Gibbs partition function~\eqref{eq:partition-def} satisfies the following estimates.
\begin{enumerate}[(i)]
\item If $W\in L^2(\T^d)$, then for all $N\ge1$,
\[1\le \bar Z_{N,\beta}\le \exp(C\beta^2\|W\|_{L^2(\T^d)}^2).\]
\item If $W\in L^{2,\infty}(\T^d)$, then for all $N\ge1$,
\[1\le\bar Z_{N,\beta}\le CN^{C\beta^2\|W\|_{L^{2,\infty}(\T^d)}^2}.\]
\item If $W$ is the repulsive periodic Riesz kernel of order $0<s<d$, that is,
\[\widehat W(\xi)=|\xi|^{s-d}\mathds1_{\xi\ne0},\]
then for all $N\ge1$,
\begin{equation}\label{eq:Riesz-cas-partition}
1\le\bar Z_{N,\beta}\le \left\{\begin{array}{lll}
C&:&s<\frac d2,\\
CN^{C\beta^2}&:&s=\frac d2,\\
C\exp(C\beta^{d/s}N^{2-d/s})&:&s>\frac d2.
\end{array}\right.
\end{equation}
\end{enumerate}
Here the constants $C$ depend only on $d$ in~(i)--(ii), and only on $d,s$ in~(iii).
\end{theor}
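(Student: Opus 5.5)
The lower bound $\bar Z_{N,\beta}\ge1$ follows from Jensen's inequality. Since the torus has unit volume, the product measure $dx_1\cdots dx_N$ is a probability measure, so
\[\bar Z_{N,\beta}\ge\exp\Big(-\frac\beta{2N}\sum_{i\ne j}\int W(x_i-x_j)\Big)=1,\]
using $\int_{\T^d}W=0$.

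For the upper bound in~(i), I will use a Mayer-type cluster expansion adapted to the mean-field scaling. Write
\[e^{-\frac\beta N W(x_i-x_j)}=1+\phi_{ij},\qquad \phi_{ij}:=e^{-\frac\beta NW(x_i-x_j)}-1.\]
Expanding the product over pairs gives
\[\bar Z_{N,\beta}=\sum_{G}\int\prod_{(ij)\in G}\phi_{ij},\]
the sum running over all graphs $G$ on $\{1,\dots,N\}$. Since $\beta\|W_-\|_\infty\ll1$, we have $|\phi_{ij}|\le C\frac\beta N|W(x_i-x_j)|$ and $\int\phi_{ij}\,dx_j=O(\beta^2N^{-2}\|W\|_{L^2}^2)$. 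The latter uses the centering: expanding to second order, the linear term $-\frac\beta NW$ integrates to zero, and the remainder is bounded by $\frac{\beta^2}{N^2}W^2e^{\beta|W_-|/N}$.

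The basic mechanism is that a leaf edge of a tree integrates to $O(\beta^2/N^2)$ instead of $O(\beta/N)$. Rather than the classical connected-graph expansion, I will organize the estimate through the log: $\log\bar Z_{N,\beta}$ is the sum over connected clusters (Penrose tree-graph bound). A connected cluster on $k$ vertices contributes a factor $N^k/k!$ from vertex choices, times $N^{-(k-1)}$ from edges, times integrals of products of $W$. For trees, centering kills every cluster with a leaf unless the leaf factor is paired with a second edge on the same pair. The net effect is that the leading contributions come from cycles and multi-edges. A $k$-cycle gives
\[\frac{N^k}{2k}\Big(\frac\beta N\Big)^k\sum_{\xi\ne0}\widehat W(\xi)^k=O\big(\beta^k\|W\|_{L^2}^2\|W\|_{L^1}^{k-2}\big),\]
which sums geometrically thanks to $\beta\|W\|_{L^1}\ll1$. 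This yields $\log\bar Z\le C\beta^2\|W\|_{L^2}^2$.

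Technically, I expect to carry this out via an alternative route that avoids the full cluster expansion. Decompose
\[\sum_{i\ne j}W(x_i-x_j)=\sum_{\xi\ne0}\widehat W(\xi)\big(|\rho_\xi|^2-N\big),\qquad \rho_\xi=\sum_j e^{i\xi\cdot x_j},\]
and control the resulting degenerate $U$-statistic by a decoupling and Hubbard--Stratonovich-type argument combined with a bound on exponential moments of $\sum\phi$ via iterated conditioning. Still, the cleanest plan is the cluster expansion with a tree-graph inequality. The main obstacle is precisely this combinatorial step: showing that connected clusters containing more than two edges on fewer than two independent cycles are summable with the gained $N$-powers. I will use the Penrose partition scheme so that each tree edge is bounded by $\frac\beta N|W|$ in $L^1$ (hence the smallness assumption on $\beta\|W\|_{L^1}$), while the "excess" edges provide the needed $N^{-1}$ gains, using $L^2$ only for two-edge loops.

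For~(ii) and~(iii), I will interpolate using Hölder's inequality, $\bar Z[W_1+W_2]\le\bar Z[2W_1]^{1/2}\bar Z[2W_2]^{1/2}$. Split $W=W_{\le R}+W_{>R}$ in Fourier space (or truncate at level $|W|\le\Lambda$, keeping the negative part bounded). The regular part $W_{\le R}$ is in $L^2$, and~(i) gives
\[\log\bar Z\lesssim\beta^2\|W_{\le R}\|_{L^2}^2\sim\beta^2\sum_{|\xi|\le R}\widehat W(\xi)^2,\]
which is $\log R$ in the $L^{2,\infty}$/critical Riesz case and $R^{2s-d}$ for $s>d/2$. For the singular part at high frequency, use the trivial bound available for $H$-stable pieces. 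Since
\[\sum_{i\ne j}W_{>R}(x_i-x_j)\ge-NW_{>R}(0),\]
we get $\bar Z[W_{>R}]\le e^{\beta W_{>R}(0)/2}$ with $W_{>R}(0)$ interpreted as a positive-definite kernel value. Smoothing by mollification at scale $R^{-1}$ gives a correction of order $\beta\sum_{|\xi|>R}\widehat W(\xi)$, suitably regularized. Balancing the two contributions: choosing $R=N$ gives $N^{C\beta^2}$ in the critical case, and choosing $R\sim(\beta N)^{1/s}$ gives $\exp(C\beta^{d/s}N^{2-d/s})$ for $s>d/2$. For general $L^{2,\infty}$ potentials in~(ii) I will instead truncate at height $\Lambda=N$ in physical space, bounding the tail part via Jensen/convexity with $\beta\|W\|_{L^1}$ small. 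Either way, this interpolation step is routine once~(i) is established.
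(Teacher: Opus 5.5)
Your plan for (i) follows the paper's route (Mayer expansion, Penrose, centering, one extra edge closing a cycle), and the lower bound is the same Jensen argument. You do need the Penrose \emph{identity} rather than the tree-graph inequality: taking absolute values tree by tree loses the centering and only gives $\log\bar Z_{N,\beta}\lesssim N\beta\|W\|_{L^1}$. The paper recenters $h=(f-c_0)/(1+c_0)$. Then $\bar Z_{N,\beta}=(1+c_0)^{\binom N2}Z_h$ with $(1+c_0)^{\binom N2}\le e^{C\beta^2\|W\|_{L^2}^2}$, and bare trees integrate to zero exactly; with your uncentered $\phi$, a leaf integrates to $c_0\neq0$, not to zero. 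The paper then telescopes $\prod_{e\in R(T)}(1+h_e)-1$ to isolate one extra edge. This closes a single cycle of length $\ell\ge3$, bounded by $\Tr((|h|\ast)^\ell)\le\|h\|_{L^2}^2\|h\|_{L^1}^{\ell-2}$; there are no multi-edges or two-edge loops in the Mayer expansion.

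The genuine gap is your bound on the singular piece via ``the trivial bound available for $H$-stable pieces''. This is your only argument for (iii) when $s>\frac d2$, the one case of (iii) not already contained in (i)--(ii). Since $W_{>R}=W-W_{\le R}$ keeps the full $|x|^{-s}$ singularity at the origin, $W_{>R}(0)=\sum_{|\xi|>R}|\xi|^{s-d}=+\infty$ for every $s>0$. So $\sum_{i\ne j}W_{>R}(x_i-x_j)\ge-NW_{>R}(0)$ says nothing. The ``suitably regularized'' correction $\beta\sum_{|\xi|>R}\widehat W(\xi)$ is the same divergent series. Smearing would require a pointwise comparison between $W_{>R}$ and its mollification, whose cost you do not estimate. (In (ii), $W$ is not even assumed $H$-stable.) Even granting a high-frequency contribution of the correct form $\beta NR^{s-d}$, balancing it with $\beta^2R^{2s-d}$ forces $R^s\sim N/\beta$, not $R^s\sim\beta N$; your choice does not produce the factor $\beta^{d/s}$.

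What is missing is the elementary convexity bound that only sees the negative part of the singular piece. Integrate out the last particle and apply Jensen to the average over the others:
\[\int_{\T^d}e^{-\frac\beta N\sum_{i<n}U(x_i-y)}\,dy\le\frac1{n-1}\sum_{i<n}\int_{\T^d}e^{-\beta\frac{n-1}NU(x_i-y)}\,dy=\int_{\T^d}e^{-\beta\frac{n-1}NU}.\]
Iterating gives $Z_{N,\beta}(U)\le\prod_{n=1}^{N-1}\int_{\T^d}e^{\beta\frac nNU_-}\le\exp\big(\beta N\|U_-\|_{L^1}e^{\beta\|U_-\|_{L^\infty}}\big)$.

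This must be combined with a truncation in physical space, as in your fallback for (ii), not in Fourier space. Take $V_M=W\mathds1_{|W|>M}-\int_{\T^d}W\mathds1_{|W|>M}$ with $M\ge\|W_-\|_{L^\infty}$. The first term is then nonnegative, so $\|(V_M)_-\|_{L^1}\le\|W\mathds1_{|W|>M}\|_{L^1}$ and $\|(V_M)_-\|_{L^\infty}\le\|W\|_{L^1}$. Meanwhile $U_M=W-V_M$ satisfies the hypotheses of (i). Cauchy--Schwarz then gives $\log\bar Z_{N,\beta}\lesssim\beta^2\|U_M\|_{L^2}^2+\beta N\|W\mathds1_{|W|>M}\|_{L^1}$.
\begin{itemize}
\item For (ii), $\|W\mathds1_{|W|>M}\|_{L^1}\lesssim M^{-1}\|W\|_{L^{2,\infty}}^2$, and $M=N\|W\|_{L^{2,\infty}}$ gives $CN^{C\beta^2\|W\|^2_{L^{2,\infty}}}$.
\item For Riesz kernels with $s>\frac d2$, the two terms are $\beta^2M^{2-d/s}$ and $\beta NM^{1-d/s}$, and $M\sim N/\beta$ yields exactly $\exp(C\beta^{d/s}N^{2-d/s})$.
\end{itemize}
So use the physical-space truncation uniformly and state this convexity estimate explicitly. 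It is the step that makes the prefactor $N$ harmless, because $N$ multiplies the $L^1$ norm of the tail rather than its value at the origin.
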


We next show that the condition $W\in L^2(\T^d)$ is sharp for the uniform boundedness of the partition function. More precisely, when $W\in L^2(\T^d)$, the partition function has an explicit limit as $N\uparrow\infty$, and the necessity of the $L^2$-condition then follows directly from the divergence of the latter. This also shows that, in general, a restriction on $\beta$ cannot be avoided, unless the potential is $H$-stable, in the sense that $\widehat W\ge0$. Finally, for Riesz kernels, the estimates of Theorem~\ref{th:partition}(iii) are consistent with this limiting formula: introducing a natural Fourier cut-off at frequencies $|\xi|\lesssim N^{1/s}$ in~\eqref{eq:limZ} yields precisely the divergences displayed in~\eqref{eq:Riesz-cas-partition}, thereby supporting the expected optimality of those bounds.

\begin{prop}[Equilibrium fluctuations]\label{cor:equ-fl}
Let $W\in L^1(\T^d)$ be even and centered, $\int_{\T^d}W=0$, with $W_-\in L^\infty(\T^d)$.
\begin{enumerate}[(i)]
\item If $\beta\|W\|_{L^1(\T^d)}\ll1$ is smaller than some universal constant and if $W\in L^2(\T^d)$, then
\begin{equation}\label{eq:limZ}
\lim_{N\uparrow\infty}\bar Z_{N,\beta}
=\exp\bigg(\sum_{\xi\ne0}\Big(\tfrac\beta2\widehat W(\xi)-\log(1+\tfrac\beta2\widehat W(\xi))\Big)\bigg)<\infty.
\end{equation}
\item If $W\notin L^2(\T^d)$, then for all $0<\beta<\infty$,
\[\lim_{N\uparrow\infty}\bar Z_{N,\beta}=\infty.\]
\end{enumerate}
\end{prop}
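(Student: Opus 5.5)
The plan is to linearize the energy in Fourier space, identify the limit of $\bar Z_{N,\beta}$ by a central limit theorem for finitely many Fourier modes, and control the high modes either by the uniform bound of Theorem~\ref{th:partition}(i) (part (i)) or by a Gibbs/Jensen lower bound (part (ii)).

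\emph{Fourier form of the energy.} With $\rho_\xi:=\sum_{j=1}^N e^{-2\pi i\xi\cdot x_j}$ and $\widehat W(0)=0$, I would write
\[
H_N:=\frac{\beta}{2N}\sum_{i\ne j}W(x_i-x_j)=\frac\beta2\sum_{\xi\ne0}\widehat W(\xi)\Big(\frac{|\rho_\xi|^2}{N}-1\Big),
\]
so that $\bar Z_{N,\beta}=\mathbb E[e^{-H_N}]$ under i.i.d.\ uniform points. For any finite set of modes, the multivariate CLT shows that the vectors $(N^{-1/2}\rho_\xi)_\xi$, with $\rho_{-\xi}=\overline{\rho_\xi}$, converge jointly to independent standard complex Gaussians indexed by pairs $\pm\xi$. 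Hence the $|\rho_\xi|^2/N$ converge to independent $\mathrm{Exp}(1)$ variables $E_\xi$ (with $E_{-\xi}=E_\xi$). For an exponential variable one has
\[
\mathbb E\big[e^{-a(E-1)}\big]=\frac{e^{a}}{1+a}\quad\text{for }a>-1,
\]
and multiplying over pairs gives exactly the product $\exp\big(\sum_{\xi\ne0}(\tfrac\beta2\widehat W(\xi)-\log(1+\tfrac\beta2\widehat W(\xi)))\big)$ in~\eqref{eq:limZ}. The constraint $a>-1$ holds because $|\widehat W|\le\|W\|_{L^1}$ and $\beta\|W\|_{L^1}\ll1$. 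The series converges iff $\widehat W\in\ell^2$, since $a-\log(1+a)\sim a^2/2$, that is, iff $W\in L^2$.

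\emph{Part (i).} I would split $W=W_{\le R}+W_{>R}$ by Fourier truncation, and correspondingly $H_N=H_N^{\le R}+H_N^{>R}$. By the CLT and continuous mapping, $H_N^{\le R}$ converges in law to $\frac\beta2\sum_{0<|\xi|\le R}\widehat W(\xi)(E_\xi-1)$. For the tail I would use that $H_N^{>R}$ is a degenerate $U$-statistic, since $W$ is centered, so its second moment is computed exactly:
\[
\mathbb E|H_N^{>R}|^2\le C\beta^2\sum_{|\xi|>R}|\widehat W(\xi)|^2\to0 \quad\text{as } R\uparrow\infty,
\]
uniformly in $N$. Hence $H_N$ converges in law to the full Gaussian-chaos limit. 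To pass from convergence in law to convergence of $\mathbb E[e^{-H_N}]$, I would use uniform integrability. It comes from Theorem~\ref{th:partition}(i) applied at inverse temperature $2\beta$, which gives $\sup_N\mathbb E[e^{-2H_N}]=\sup_N\bar Z_{N,2\beta}<\infty$. This requires the smallness of $\beta(\|W\|_{L^1}+\|W_-\|_{L^\infty})$, so I would adjust the universal constant and use boundedness of $W_-$ here.

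\emph{Part (ii).} Here the obstacle is that no uniform integrability is available and the variance of high modes is infinite, so I would only prove a lower bound by a Gibbs/Jensen argument. Fix a finite window of modes $A=\{R_0<|\xi|\le R\}$, with $R_0$ chosen so that $\beta|\widehat W(\xi)|\le\frac12$ on $A$; this is possible for any $\beta$ by Riemann--Lebesgue. Use a smooth (Fej\'er-type) truncation $W_A$ of $W$ to these modes. Let $Q_A$ be the probability measure with density $e^{-H_N^{A}}/\mathbb E[e^{-H_N^A}]$, where $H_N^A$ is the energy built from $W_A$. Jensen's inequality gives
\[
\bar Z_{N,\beta}\ge \mathbb E\big[e^{-H_N^A}\big]\exp\big(-\mathbb E_{Q_A}[H_N-H_N^A]\big).
\]
The first factor converges, by the part (i) argument (all quantities are now finitely many smooth modes, and $e^{-H_N^A}$ is bounded in $L^2$ by the same CLT/Gaussian computation), to $\exp\sum_{\xi\in A}(\tfrac\beta2\widehat W-\log(1+\tfrac\beta2\widehat W))$. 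The main point is to show that $\mathbb E_{Q_A}[H_N-H_N^A]$ stays bounded uniformly in $N$ and $R$. The idea is that the two-particle marginal of $Q_A$ is a smooth, bounded perturbation of uniform, depending only on modes in $A$. Therefore $\mathbb E_{Q_A}[H_N-H_N^A]$ reduces to pairing $W-W_A\in L^1$ against a smooth function, plus contributions from the low modes $|\xi|\le R_0$. Those low modes are finitely many with $\widehat W\in L^\infty$, and I would bound them by $O(\beta\|W\|_{L^1}R_0^d)$ independently of $R$ and $N$. Letting $N\uparrow\infty$ and then $R\uparrow\infty$, the divergence of $\sum_{|\xi|>R_0}|\widehat W(\xi)|^2$ forces $\liminf_N\bar Z_{N,\beta}=\infty$. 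Making this uniform bound on the tilted pair correlations precise, in particular controlling the cross terms between modes in $A$ and the untruncated remainder under $Q_A$, is the step I expect to be hardest.
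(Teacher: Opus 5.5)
\textbf{Part (i).} Your argument is sound, and it takes a somewhat different route from the paper. The paper approximates $W$ in $L^2$ by continuous $W_M$ and bounds $|\bar Z_{N,\beta}-Z_{N,\beta}(W_M)|\lesssim\beta\|W-W_M\|_{L^2}$ uniformly in $N$. That bound uses $|e^{-t}-e^{-s}|\le|t-s|(e^{-t}+e^{-s})$, Cauchy--Schwarz, Theorem~\ref{th:partition}(i) at $2\beta$, and the exact second moment of the degenerate $U$-statistic. The paper then cites Ben Arous--Brunaud for the continuous potential. You use the same two ingredients, namely the $L^2$ tail bound and $\sup_N\bar Z_{N,2\beta}<\infty$ (as uniform integrability). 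You replace the citation by a finite-mode CLT, which makes the argument self-contained.

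One step is wrong, though: the final identification of the constant. Since $E_{-\xi}=E_\xi$, each pair $\{\xi,-\xi\}$ contributes $\beta\widehat W(\xi)(E_\xi-1)$. So $a=\beta\widehat W(\xi)$, and your limit is
\[
\exp\Big(\sum_{\xi\ne0}\big(\tfrac\beta2\widehat W(\xi)-\tfrac12\log(1+\beta\widehat W(\xi))\big)\Big).
\]
This is not the expression in~\eqref{eq:limZ}; you only get that one by treating $E_\xi$ and $E_{-\xi}$ as independent. Your version is the correct one. Take $d=1$ and $W(x)=2\cos(2\pi x)$. Then $H_N=\beta(|\rho_1|^2/N-1)\ge-\beta$, so bounded convergence gives $\bar Z_{N,\beta}\to e^{\beta}/(1+\beta)$ for every $\beta>0$, whereas~\eqref{eq:limZ} gives $e^\beta/(1+\frac\beta2)^2$. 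The paper's proof has a matching slip in its determinant step: the Gaussian limit is $\det(\Id+\beta W\ast)^{-1/2}$. Finiteness if and only if $W\in L^2$ is unaffected, but you should have flagged the discrepancy rather than asserting exact agreement.

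\textbf{Part (ii).} There is a genuine gap, exactly at the step you single out. By exchangeability and translation invariance,
\[
\mathbb E_{Q_A}[H_N-H_N^A]=\tfrac{\beta(N-1)}2\int_{\T^d}(W-W_A)(g_N-1),
\]
where $g_N$ is the pair density of $Q_A$. Because of the factor $N-1$, knowing that $g_N$ is a smooth bounded perturbation of $1$ is not enough. You need a sharp $O(1/N)$ pair-correlation bound for the tilted measure, uniform in $N$, and you do not supply one.

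Even granting it, the $L^1$--$L^\infty$ pairing cannot be uniform in $R$. The expected leading part of $N(g_N-1)$ has Fourier coefficients $-\beta\widehat W_A/(1+\beta\widehat W_A)$ on $A$. Its sup norm is of order $\beta\sum_{\xi\in A}|\widehat W(\xi)|$ (for instance when $\widehat W\ge0$). The resulting bound therefore grows with $R$ and is no smaller than your gain $\sim\beta^2\sum_{\xi\in A}|\widehat W(\xi)|^2$, since $|\widehat W|\le\|W\|_{L^1}$. Closing your route would require the Fourier cancellation between $\widehat W-\widehat W_A$ and this leading term, plus a proof that the remainder of $N(g_N-1)$ vanishes in $L^\infty$. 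That is a genuine correlation expansion.

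The paper avoids this entirely with an exact comparison. Take $W_M=\chi_M\ast\chi_M\ast W$ with $\chi\ge0$ even and $\int\chi=1$. Then $\sum_{i\ne j}W_M(x_i-x_j)$ is the average of $\sum_{i\ne j}W((x_i-y_i)-(x_j-y_j))$ over i.i.d.\ shifts $y_k\sim\chi_M$. Jensen and translation invariance then give $\bar Z_{N,\beta}\ge Z_{N,\beta}(W_M)$ for every $N$. The lower half of your part (i) argument (convergence in law plus Fatou, with no uniform integrability needed) bounds $\liminf_N Z_{N,\beta}(W_M)$ below by the explicit Gaussian expression for $\widehat W\,\widehat{\chi_M}^{\,2}$. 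That expression diverges as $M\uparrow\infty$ because $\widehat W\notin\ell^2$.

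No low-mode window is needed for large $\beta$ either. Since $\bar Z_{N,\beta}\ge1$, Jensen gives $\bar Z_{N,\beta'}\ge\bar Z_{N,\beta}^{\beta'/\beta}\ge\bar Z_{N,\beta}$ for $\beta'\ge\beta$, so it suffices to treat small $\beta$.
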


The proof of Theorem~\ref{th:partition} is split into Sections~\ref{sec:L2case}--\ref{sec:noL2case}, while the proof of Proposition~\ref{cor:equ-fl} is postponed to Section~\ref{sec:divergence}.

\subsection{Partition function estimate for $L^2$ interactions}\label{sec:L2case}
This section is devoted to the proof of Theorem~\ref{th:partition}(i).
The lower bound $\bar Z_{N,\beta}\ge1$ follows from Jensen's inequality with $\int_{\T^d}W=0$, so it remains to prove the upper bound.
To this end,
we appeal to cluster expansion techniques, see e.g.~\cite{Brydges,Fernandez-Procacci,Jansen}.
Let $W\in L^2(\T^d)$ be an even periodic potential with $W_-\in L^\infty(\T^d)$ and $\int_{\T^d}W=0$.
In terms of
\[f(x):=e^{-\frac\beta NW(x)}-1,\qquad c_0:=\int_{\T^d}f,\qquad h=\frac{f-c_0}{1+c_0},\]
we can rewrite the partition function~\eqref{eq:partition-def} as
\begin{equation}\label{eq:def-Zh}
\bar Z_{N,\beta}
=(1+c_0)^{\binom{N}{2}}Z_h,\qquad Z_h:=\int_{(\T^d)^N}\prod_{i<j}^N\Big(1+h(x_i-x_j)\Big)dx_1\ldots dx_N.
\end{equation}
Using $\int_{\T^d}W=0$, Jensen's inequality and Taylor's formula give
\begin{equation}\label{eq:est-c0}
0\le c_0=\int_{\T^d}e^{-\frac\beta NW(x)}-1\le \frac{\beta^2}{2N^2}\|W\|_{L^2(\T^d)}^2e^{\frac\beta N\|W_-\|_{L^\infty}},
\end{equation}
hence
\[(1+c_0)^{\binom N2}\le \exp\Big(\beta^2\|W\|_{L^2(\T^d)}^2e^{\frac\beta N\|W_-\|_{L^\infty}}\Big).\]
It remains to estimate $Z_h$ in~\eqref{eq:def-Zh}.
For this purpose, we appeal the following Mayer expansion, see e.g.~\cite[Theorem~4.12]{Jansen},
\begin{equation}\label{eq:mayer-Z}
\log Z_h
=\sum_{\substack{S\subset\{1,\ldots,N\}\\ |S|\ge2}}
\varphi(S),\qquad\varphi(S):=\sum_{\substack{G\text{ connected}\\ V(G)=S}}
\int_{(\T^d)^{S}}\prod_{\{i,j\}\in E(G)}h(x_i-x_j)\,dx_S,
\end{equation}
where the last sum runs over connected graphs $G$ with vertex set $V(G)=S$ and where $E(G)$ stands for the  edge set of $G$.
In order to estimate these sums over graphs, we appeal to the Penrose tree-graph identity, see e.g.~\cite[Section~4.1]{Fernandez-Procacci} or~\cite[(7.1)]{Jansen}: using the short-hand notation $h_e=h(x_i-x_j)$ for an edge~$e=\{i,j\}$,
\begin{equation*}
\sum_{\substack{G\text{ connected}\\ V(G)=S}}~\prod_{e\in E(G)} h_e
=\sum_{\substack{T\text{ tree}\\ V(T)=S}}\bigg(\prod_{e\in E(T)}h_e\bigg)\prod_{e\in R(T)}(1+h_e),
\end{equation*}
where $R(T)$ is a certain set of edges depending on $T$, disjoint from $E(T)$. The important point is that the sum is now over trees, not over arbitrary connected graphs.
Since by definition $\int_{\T^d} h=0$, we note that the bare tree contribution vanishes,
\[\int_{(\T^d)^{S}}\bigg(\prod_{e\in E(T)}h_e\bigg)dx_S=0,\]
and we are thus led to
\begin{equation}\label{eq:tree-phi(S)}
\varphi(S)=\sum_{\substack{T\text{ tree}\\ V(T)=S}}\int_{(\T^d)^S}\bigg(\prod_{e\in E(T)}h_e\bigg)\bigg(\prod_{e\in R(T)}(1+h_e)-1\bigg)dx_S.
\end{equation}
Ordering the edges, we can expand the last factor as
\begin{equation}\label{eq:expand1-h}
\prod_{e\in R(T)}(1+h_e)-1=\sum_{e'\in R(T)}h_{e'}\prod_{\substack{e\in R(T)\\e<e'}}(1+h_e).
\end{equation}
Let us first estimate the product over $R(T)$. Recalling that $1+h=\frac{1}{1+c_0}e^{-\frac\beta NW}$ and that $c_0\ge0$, cf.~\eqref{eq:est-c0}, we can bound
\[\prod_{\substack{e\in R(T)\\e<e'}}(1+h_e)\le \exp\bigg(-\frac\beta N\sum_{\substack{(i,j)\in R(T)\\(i,j)<e'}}W(x_i-x_j)\bigg)\le e^{\frac\beta N|R(T)|\|W_-\|_{L^\infty}},\]
hence, using the crude bounds $|R(T)|\le\binom{|S|}{2}$ and $|S|\le N$,
\[\prod_{\substack{e\in R(T)\\e<e'}}(1+h_e)\le e^{\beta|S|\|W_-\|_{L^\infty}}.\]
Combined with~\eqref{eq:tree-phi(S)} and~\eqref{eq:expand1-h}, this yields
\begin{gather*}
|\varphi(S)|\le e^{\beta|S|\|W_-\|_{L^\infty}}\sum_{\substack{T\text{ tree}\\ V(T)=S}}\sum_{e'\in R(T)}I(T,e'),
\qquad I(T,e'):=\int_{(\T^d)^S}|h_{e'}|\bigg(\prod_{e\in E(T)}|h_e|\bigg)dx_S.
\end{gather*}
For a fixed tree $T$ on $S$ and for $e'\in R(T)$, recalling that $R(T)$ is disjoint from $E(T)$, we note that the graph $T\cup\{e'\}$ contains exactly one cycle $C$. Denote by $\ell\ge3$ its length. The contribution of this cycle in the integral $I(T,e')$ is bounded by
\[\int_{(\T^d)^\ell}\bigg(\prod_{r=1}^\ell |h(x_r-x_{r+1})|\bigg)dx_1\ldots dx_\ell=\Tr((|h|\ast)^\ell)\le
\|h\|_{L^2(\T^d)}^2\|h\|_{L^1(\T^d)}^{\ell-2},\]
with $x_{\ell+1}:=x_1$. Next, all remaining branches in $T\setminus C$ can be integrated from their leaves inward, and we obtain a factor~$\|h\|_{L^1(\T^d)}$ every time we integrate a new vertex. This leads us to
\[I(T,e')\le\|h\|_{L^2(\T^d)}^2\|h\|_{L^1(\T^d)}^{|S|-2}.\]
By Cayley's formula, the number of trees on the set $S$ is $|S|^{|S|-2}$. Using again $|R(T)|\le\binom{|S|}2$, we then obtain
\[|\varphi(S)|\le  e^{\beta|S|\|W_-\|_{L^\infty}}C^{|S|}|S|^{|S|-2}\|h\|_{L^2(\T^d)}^2\|h\|_{L^1(\T^d)}^{|S|-2}.\]
Summing this bound over all subsets $S\subset\{1,\dots,N\}$ with $|S|=k$, and using $\binom{N}{k}\le \frac{N^k}{k!}$, we deduce
\begin{eqnarray}
\sum_{\substack{S\subset\{1,\dots,N\}\\ |S|=k}}|\varphi(S)|
&\le&\binom{N}{k}e^{\beta k\|W_-\|_{L^\infty}}C^{k}k^{k-2}\|h\|_{L^2(\T^d)}^2\|h\|_{L^1(\T^d)}^{k-2}\nonumber\\[-4mm]
&\le&e^{\beta k\|W_-\|_{L^\infty}}C^k N^2\|h\|_{L^2(\T^d)}^2(N\|h\|_{L^1(\T^d)})^{k-2}.\label{eq:main-varphi}
\end{eqnarray}
Combining~\eqref{eq:mayer-Z} and~\eqref{eq:main-varphi}, and noting that
\begin{equation*}
\|h\|_{L^p(\T^d)}=\|e^{-\frac\beta NW}-1\|_{L^p(\T^d)}\le
\frac{\beta}N\|W\|_{L^p(\T^d)}e^{\frac\beta N\|W_-\|_{L^\infty}},
\end{equation*}
we can deduce, provided that $\beta(\|W\|_{L^1(\T^d)}+\|W_-\|_{L^\infty(\T^d)})\ll1$ is small enough,
\[\log Z_h\le \beta^2\|W\|_{L^2(\T^d)}^2\sum_{k=2}^NC^k(\beta\|W\|_{L^1(\T^d)})^{k-2}\le C\beta^2\|W\|_{L^2(\T^d)}^2.\]
This concludes the proof of Theorem~\ref{th:partition}(i).\qed

\subsection{Partition function estimate beyond $L^2$ interactions}\label{sec:noL2case}

This section is devoted to the proof of Theorem~\ref{th:partition}(ii)-(iii) in the case $W\notin L^2(\T^d)$. We argue by an interpolation argument based on the~$L^2$ case~(i) and we split the proof into two steps.
Denote by $Z_{N,\beta}(U)$ the partition function $\bar Z_{N,\beta}$ with potential $W$ replaced by an arbitrary even function~$U$.

\medskip\noindent
{\bf Step~1:} Partition function estimate without cancellations: for any even periodic function $U$,
\begin{equation}\label{eq:bound-naive}
Z_{N,\beta}(U)\le \exp\Big(\beta N\|U_-\|_{L^1(\T^d)}e^{\beta\|U_-\|_{L^\infty}}\Big).
\end{equation}
For any $1\le n\le N$, starting from the decomposition
\begin{multline*}
\int_{(\T^d)^n}
\exp\Bigl(-\frac\beta{2N}\sum_{i\ne j}^nU(x_i-x_j)\Bigr)\,dx_1\ldots dx_n\\
=\int_{(\T^d)^{n-1}}\exp\Big(-\frac\beta{2N}\sum_{i\ne j}^{n-1}U(x_i-x_j)\Big)\bigg(\int_{\T^d}\exp\Big(-\frac\beta N\sum_{i=1}^{n-1}U(x_i-y)\Big)dy\bigg)dx_1\ldots dx_{n-1},
\end{multline*}
we can bound by convexity
\begin{eqnarray*}
\lefteqn{\int_{(\T^d)^n}
\exp\Bigl(-\frac\beta{2N}\sum_{i\ne j}^nU(x_i-x_j)\Bigr)\,dx_1\ldots dx_n}\\
&\le& \int_{(\T^d)^{n-1}}\exp\Big(-\frac\beta{2N}\sum_{i\ne j}^{n-1}U(x_i-x_j)\Big)\bigg(\frac1{n-1}\sum_{i=1}^{n-1}\int_{\T^d}e^{-\beta\frac{n-1}NU(x_i-y)}dy\bigg)dx_1\ldots dx_{n-1}\\
&=&\Big(\int_{\T^d}e^{-\beta\frac{n-1}NU}\Big)\int_{(\T^d)^{n-1}}\exp\Big(-\frac\beta{2N}\sum_{i\ne j}^{n-1}U(x_i-x_j)\Big)dx_1\ldots dx_{n-1},
\end{eqnarray*}
and thus, by a direct iteration,
\begin{equation*}
Z_{N,\beta}(U)
\le\prod_{n=1}^{N-1}\int_{\T^d}e^{\beta\frac{n}NU_-}.
\end{equation*}
Using the elementary inequality $e^t\le 1+te^t$ for $t\ge0$, this yields the claim.

\medskip\noindent
{\bf Step~2:} Conclusion.\\
Given $M>0$, we decompose the interaction potential as
\[W=U_M+V_M,\qquad U_M:=W\mathds1_{|W|\le M}-\int_{\T^d}W\mathds1_{|W|\le M}.\]
Since $\int_{\T^d}W=0$, this also gives
\[V_M=W\mathds1_{|W|> M}-\int_{\T^d}W\mathds1_{|W|> M}.\]
By the Cauchy-Schwarz inequality,
\[\bar Z_{N,\beta}\le Z_{N,2\beta}(U_M)^\frac12Z_{N,2\beta}(V_M)^\frac12.\]
Using Theorem~\ref{th:partition}(i) for $Z_{N,2\beta}(U_M)$, and the crude bound~\eqref{eq:bound-naive} for $Z_{N,2\beta}(V_M)$, we obtain
\begin{equation}\label{eq:bound-Z-interpol}
\bar Z_{N,\beta}\le \exp\Big(C\beta^2\|U_M\|_{L^2(\T^d)}^2+C\beta N\|V_M\|_{L^1(\T^d)}e^{\beta\|(V_M)_-\|_{L^\infty}}\Big),
\end{equation}
provided that $\beta(\|U_M\|_{L^1}+\|(U_M)_-\|_{L^\infty})\ll1$ is sufficiently small.

Depending on the properties of $W$, estimating the norms of $U_M$ and $V_M$ and optimizing the choice of~$M$, the conclusion follows easily from~\eqref{eq:bound-Z-interpol}.
Let us focus on the critical case $W\in L^{2,\infty}(\T^d)$. By the standard layer-cake estimates for weak-$L^2$ functions, we find
\[\|U_M\|_{L^2(\T^d)}\le2\Big(\int_{\T^d}|W|^2\mathds1_{|W|\le M}\Big)^\frac12\lesssim\|W\|_{L^{2,\infty}(\T^d)}\log^\frac12\Big(2+\frac{M}{\|W\|_{L^{2,\infty}(\T^d)}}\Big),\]
and
\[\|V_M\|_{L^1(\T^d)}\le2\int_{\T^d}|W|\mathds1_{|W|>M}\lesssim M^{-1}\|W\|_{L^{2,\infty}(\T^d)}^2.\]
Moreover,
\begin{eqnarray*}
\|U_M\|_{L^1(\T^d)}+\|V_M\|_{L^1(\T^d)}&\lesssim&\|W\|_{L^1(\T^d)},\\
\|(U_M)_-\|_{L^\infty(\T^d)}+\|(V_M)_-\|_{L^\infty(\T^d)}&\lesssim&\|W\|_{L^1(\T^d)}+\|W_-\|_{L^\infty(\T^d)}.
\end{eqnarray*}
Inserting these estimates into~\eqref{eq:bound-Z-interpol} and choosing $M=N\|W\|_{L^{2,\infty}(\T^d)}$, we obtain
\begin{equation*}
\bar Z_{N,\beta}\le \exp\Big(C\beta^2\|W\|_{L^{2,\infty}(\T^d)}^2\log N+C\Big)\le C N^{C\beta^2\|W\|_{L^{2,\infty}(\T^d)}^2},
\end{equation*}
provided that $\beta(\|W\|_{L^1}+\|W_-\|_{L^\infty})\ll1$ is small enough. This concludes the proof.
\qed

\subsection{Proof of Proposition~\ref{cor:equ-fl}}\label{sec:divergence}
We start with the proof of~(i). For $W\in L^2(\T^d)$ as in the statement, given $M>0$, let us decompose this time the interaction potential as
\[W=W_M+R_M,\]
for some $W_M\in C(\T^d)$ with $\int_{\T^d}W_M=0$, $\|(W_M)_-\|_{L^\infty}\lesssim\|W_-\|_{L^\infty}$, and $W_M\to W$ in~$L^2(\T^d)$ as~$M\uparrow\infty$.
Let then $\bar Z_{N,\beta}^M:=Z_{N,\beta}(W_M)$ stand for the partition function with $W$ replaced by $W_M$. Using the elementary bound $|e^{-t}-e^{-s}|\le |t-s|(e^{-t}+e^{-s})$, the Cauchy-Schwarz inequality yields
\begin{equation*}
|\bar Z_{N,\beta}-\bar Z_{N,\beta}^M|
\le
\Big((\bar Z_{N,2\beta})^\frac12+(\bar Z_{N,2\beta}^M)^\frac12\Big)
\bigg(\int_{(\T^d)^N}\Big|\frac{\beta}{2N}\sum_{i\neq j}^NR_M(x_i-x_j)\Big|^2dx_1\ldots dx_N\bigg)^\frac12.
\end{equation*}
Expanding the square and using $\int_{\T^d}R_M=0$, this implies
\begin{equation*}
|\bar Z_{N,\beta}-\bar Z_{N,\beta}^M|
\le
\beta\Big((\bar Z_{N,2\beta})^\frac12+(\bar Z_{N,2\beta}^M)^\frac12\Big)
\Big(\int_{\T^d}|R_M|^2\Big)^\frac12.
\end{equation*}
For $\beta$ small enough, applying Theorem~\ref{th:partition}(i) and recalling $R_M=W-W_M\to0$ in $L^2(\T^d)$, we deduce
\begin{equation}\label{eq:ZMapprox}
\lim_{M\uparrow\infty}\sup_N|\bar Z_{N,\beta}-\bar Z_{N,\beta}^M|=0.
\end{equation}
For fixed $M$, as $W_M$ is continuous and bounded with $\int_{\T^d}W_M=0$, we can now appeal to~\cite[Th\'eor\`eme~(C)]{Benarous-Brunaud}: provided $\beta\|(W_M)_-\|_{L^\infty}<1$, we obtain
\begin{eqnarray*}
\lim_{N\uparrow\infty}\bar Z_{N,\beta}^M
&=&e^{\frac\beta{2}W_M(0)}\lim_{N\uparrow\infty}\int_{(\T^d)^N}\exp\Big(-\frac\beta{2N}\sum_{i,j=1}^NW_M(x_i-x_j)\Big)dx_1\ldots dx_N\\
&=&e^{\frac\beta{2}W_M(0)}\det{}_{\! L^2(\T^d)}\big(\Id+\tfrac\beta2 W_M\ast\big)^{-\frac12},
\end{eqnarray*}
that is, computing the determinant and expanding $W_M(0)=\sum_\xi\widehat W_M(\xi)$,
\begin{equation}\label{eq:limZM}
\lim_{N\uparrow\infty}\bar Z_{N,\beta}^M=\exp\bigg(\sum_{\xi\ne0}\Big(\tfrac\beta2\widehat W_M(\xi)-\log(1+\tfrac\beta2\widehat W_M(\xi))\Big)\bigg).
\end{equation}
Combined with~\eqref{eq:ZMapprox}, this yields the claim.

We turn to the proof of~(ii).
The above regularized potential $W_M$ can be constructed for instance as $W_M=\chi_M\ast\chi_M\ast W$, with $\chi_M(x)=M^d\chi(Mx)$, for some even cut-off function~$\chi\in C^\infty(\T^d)$ with~$\int_{\T^d}\chi=1$.
It is then a consequence of Jensen's inequality that
\[\bar Z_{N,\beta}\ge\int_{(\T^d)^N}\exp\Big(-\frac\beta{2N}\sum_{i\ne j}^N(\chi_M\ast\chi_M\ast W)(x_i-x_j)\Big)dx_1\ldots dx_N=\bar Z_{N,\beta}^M.\]
%and thus in particular, for all $M$, recalling the choice $\widehat W_M(\xi)=\widehat W(\xi)\hat\chi(\frac\xi M)^2$,
%\[\bar Z_{N,\beta}\ge \bar Z_{N,\beta}^M.\]
Using the lower bound $t-\log(1+t)\ge \frac14t^2$ for $0<t\le1$, we easily check that the right-hand side in~\eqref{eq:limZM} diverges as $M\uparrow\infty$ if $W\notin L^2(\T^d)$. This allows to conclude $\lim_N\bar Z_{N,\beta}=\infty$.\qed

\section{A priori correlation estimates}
This section is devoted to a priori estimates on particle correlations. As e.g.\@ in~\cite{BGS-17,DSR-21}, we define correlation functions as the Hoeffding coefficients measuring the deviation of the Liouville solution~$F_N$ from the mean-field equilibrium~$M_\beta^{\otimes N}$. More precisely, we let $\pi_\beta$ denote the orthogonal projection onto the span of the mean-field equilibrium $M_\beta$ in $L^2(1/M_\beta)$, that is,
\[\pi_\beta h=M_\beta\int_{\Dd}h,\]
and we define for all $1\le m\le N$,
\begin{equation}\label{eq:HNm-def}
H_{N,m}=(\Id-\pi_\beta)^{\otimes m}F_{N,m},
\end{equation}
where we recall that $F_{N,m}$ is the $m$-particle marginal~\eqref{eq:marginalFNm}. More explicitly, this means
\begin{equation}\label{eq:HNm-marg}
H_{N,m}(z_1,\ldots,z_m)=\sum_{j=0}^{m}(-1)^{m-j}\sum_{\sigma\in P^m_j}F_{N,j}(z_\sigma)M_\beta^{\otimes m-j}(z_{[m]\setminus\sigma}),
\end{equation}
where $P_j^m$ stands for the set of all $j$-element subsets of $[m]:=\{1,\ldots,m\}$ and where we write $z_J:=(z_{i_1},\ldots,z_{i_l})$ for an index set $J=\{i_1,\ldots,i_l\}$.
Using the orthogonality properties of these correlation functions, together with our estimates on partition functions, we establish the following uniform-in-time a priori estimates.

\begin{prop}\label{prop:apriori-lincorrel}
Let $K=-\nabla W\in L^1(\T^d)^d$ for some even potential $W\in L^2(\T^d)$ with $W_-\in L^\infty(\T^d)$ and $\int_{\T^d}W=0$. Let $F_N$ be a global bounded weak solution of~\eqref{eq:Liouville} with initial data~\eqref{eq:initial} such that~\eqref{eq:conserv} holds. Provided that~$\beta\|W\|_{L^1(\T^d)}\ll1$ is sufficiently small, we have for all $1\le m\le N$ and $t\ge0$,
\begin{equation}\label{eq:est-HNm-aprioriL2}
\Big(\int_{\Dd^m}\frac{|H_{N,m}(t)|^{2}}{M_\beta^{\otimes m}}\Big)^\frac1{2}\lesssim_m N^{-\frac{m-1}2}.
\end{equation}
More generally, given $2\le p<\infty$ and $\e>0$, provided that $\beta\|W\|_{L^1(\T^d)}\ll_{p,\e}1$ is sufficiently small (depending on $p,\e$), we further have for all $1\le m\le N$ and $t\ge0$,
\begin{equation}\label{eq:est-HNm-aprioriLp}
\Big(\int_{\Dd^m}\frac{|H_{N,m}(t)|^{p}}{|M_\beta^{\otimes m}|^{p-1}}\Big)^\frac1{p}\lesssim_{m,p,\e} N^{\frac{m}{2}(1-\frac2p)(1+\e)-\frac{m-1}2}.
\end{equation}
The smallness conditions on $\beta$ can be dropped if $W$ is the repulsive logarithmic kernel. If the assumption $W\in L^2(\T^d)$ is replaced by $W\in L^{2,\infty}(\T^d)$, then the same estimates~\eqref{eq:est-HNm-aprioriL2} and~\eqref{eq:est-HNm-aprioriLp} hold with an additional factor $N^{C\beta^2}$.
\end{prop}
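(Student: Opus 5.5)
The plan is to argue by duality, in three moves. First, use the conservation estimate~\eqref{eq:conserv} to control $\Phi_N:=F_N/M_{N,\beta}$ in $L^q(M_{N,\beta})$. Second, transfer between the interacting equilibrium $M_{N,\beta}$ and the product measure $M_\beta^{\otimes N}$ by H\"older's inequality, paying only powers of the moment ratios~\eqref{eq:moment-MN}; these are uniformly bounded by Theorem~\ref{th:partition}(i) for $\beta$ small. Third, exploit the fact that testing $H_{N,m}$ against a function amounts to testing $F_N$ against a degenerate (completely canonical) $U$-statistic of order $m$. The extra $N^{-m/2}$ decay of such $U$-statistics produces the gain $N^{-\frac{m-1}2}$.

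Concretely, for a test function $g$ on $\Dd^m$, let $Qg$ denote the Hoeffding projection of $g$. It is obtained by applying, in each variable, $\Id-\pi_\beta^*$ with $\pi_\beta^*\phi=\int\phi M_\beta$, so that $Qg$ has vanishing $M_\beta$-averages in each variable. By~\eqref{eq:HNm-def} and exchangeability of $F_N$ we then get
\[\int_{\Dd^m}H_{N,m}\,g=\int_{\Dd^N}F_N\,U_N(g),\qquad U_N(g):=\binom Nm^{-1}\sum_{\sigma\in P^N_m}(Qg)(z_\sigma).\]
Writing $F_N=\Phi_N M_{N,\beta}$ and applying H\"older with exponents $(q,q')$ in $L^2(M_{N,\beta})$, then H\"older again with $r,r'$ to pass to $M_\beta^{\otimes N}$, yields
\[\Big|\int H_{N,m}g\Big|\le\|\Phi_N(t)\|_{L^q(M_{N,\beta})}\Big(\tfrac{\bar Z_{N,r\beta}}{(\bar Z_{N,\beta})^r}\Big)^{\frac1{rq'}}\|U_N(g)\|_{L^{q'r'}(M_\beta^{\otimes N})}.\]
For the $L^2$ estimate~\eqref{eq:est-HNm-aprioriL2}, the key choice is to take $q$ large, so that $q'<2$, and to pick $r'$ with $q'r'=2$. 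Orthogonality of Hoeffding components under the product measure then gives the exact bound
\[\|U_N(g)\|_{L^2(M_\beta^{\otimes N})}\lesssim_m N^{-m/2}\|g\|_{L^2(M_\beta^{\otimes m})}.\]
By~\eqref{eq:conserv}, the first factor is bounded by the same norm at time $t=0$, namely $\|\sum_jf_\circ(z_j)/\bar Z\|_{L^q}$, where the normalization is the one in~\eqref{eq:initial}. Applying H\"older once more to pass to $M_\beta^{\otimes N}$, where the $f_\circ(z_j)$ are i.i.d.\ and centered (because $M_\beta$ is uniform in $x$ and $\int f_\circ M_\beta=0$), the Marcinkiewicz--Zygmund inequality gives $\lesssim_q\sqrt N\|f_\circ\|_{L^\infty}$. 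Multiplying out, $\sqrt N\,N^{-m/2}=N^{-\frac{m-1}2}$, and duality in $L^2(M_\beta^{\otimes m})$ gives~\eqref{eq:est-HNm-aprioriL2}. The price is that the ratios $\bar Z_{N,r\beta}/(\bar Z_{N,\beta})^r$ with $r$ depending on $q$ must be bounded, which is exactly where the smallness of $\beta$ enters via Theorem~\ref{th:partition}(i). In the logarithmic case this is replaced by the known uniform partition bound for all $\beta$~\cite{2506.22083}. In the $L^{2,\infty}$ case, Theorem~\ref{th:partition}(ii) instead gives the stated loss $N^{C\beta^2}$.

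For the $L^p$ estimate~\eqref{eq:est-HNm-aprioriLp}, I will interpolate (Riesz--Thorin on the linear map $f_\circ$-data $\mapsto H_{N,m}$, or directly on the dual side) between the $L^2$ bound above and an almost-$L^\infty$ bound. For the almost-$L^\infty$ bound, I run the same duality with $g\in L^{s}(M_\beta^{\otimes m})$ for $s$ close to $1$, using only the trivial estimate $\|U_N(g)\|_{L^{s}}\lesssim\|g\|_{L^{s}}$ (no cancellation). This should give $\|H_{N,m}/M_\beta^{\otimes m}\|_{L^{s'}(M_\beta^{\otimes m})}\lesssim N^{\frac12+\frac m2\e}$, where the $\sqrt N$ again comes from Marcinkiewicz--Zygmund and the $\e$-loss from taking $s$ close to $1$ with Hölder exponents tied to $\beta$. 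Interpolating with weight $\theta=2/p$ on the $L^2$ bound yields exactly the exponent $-\frac{m-1}2+(1-\frac2p)\big(\frac m2+\frac{m}{2}\e\big)$ of~\eqref{eq:est-HNm-aprioriLp}, up to relabelling $\e$.

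The main obstacle is this last endpoint. I am not sure that the trivial bound on $U_N$ combined with Hölder truly yields $N^{1/2+O(\e)}$ rather than a loss growing with $m$. If it does not, the fallback is to use the Gin\'e--Lata{\l}a--Zinn / de la Pe\~na moment inequalities for degenerate $U$-statistics in $L^{s}$, $s\ge2$, on the dual side. This would give $\|U_N(g)\|_{L^s}\lesssim N^{-m/2}$ times a norm of $g$ that is stronger than $L^{s}$, and the gap would be absorbed through the $(1+\e)$ factor by choosing the H\"older exponents, and hence $\beta$, depending on $p,\e$.
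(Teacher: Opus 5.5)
Your proposal is correct and is essentially the paper's proof in dual form: the paper applies the Hoeffding orthogonality identity $\sum_{m}\binom Nm\int|H_{N,m}|^2/M_\beta^{\otimes m}=\int|F_N|^2/M_\beta^{\otimes N}$ directly, bounds the right-hand side by $O(N)$ using conservation, Cauchy--Schwarz and the same partition-function ratios (Lemma~\ref{lem:apriori-Liouv}), and then interpolates with the trivial bound $\int|H_{N,m}|^q/|M_\beta^{\otimes m}|^{q-1}\lesssim_m\int|F_N|^q/|M_\beta^{\otimes N}|^{q-1}\lesssim_q N^{q/2}$, which is the primal form of your endpoint. Your worry about that endpoint is unfounded, and your $\e$-bookkeeping is off: the trivial bound gives exactly $C_{m,s'}N^{1/2}$ with no loss in the exponent, and the $(1+\e)$ factor arises only because the endpoint $s'$ is finite, so the correct interpolation weight is $\frac{1/p-1/s'}{1/2-1/s'}$ rather than $2/p$; plain log-convexity of the norms of the fixed function $H_{N,m}(t)$ then suffices, with no need for Riesz--Thorin (the data-to-solution map need not be linear for weak solutions) or for the Gin\'e--Lata{\l}a--Zinn fallback.
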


The proof is postponed to Section~\ref{sec:prop:apriori-lincorrel}; we start with some more basic a priori estimate on the Liouville solution.

\subsection{A priori estimate on Liouville solution} 
The following uniform-in-time a priori estimate shows that the Liouville solution $F_N$ remains over time a fluctuation around the mean-field equilibrium~$M_\beta^{\otimes N}$. This property follows from our estimates on partition functions, which indeed yield a control on the density of the interacting Gibbs equilibrium relative to the product mean-field measure; see~\eqref{eq:moment-MN}.

\begin{lem}\label{lem:apriori-Liouv}
Let $K=-\nabla W\in L^1(\T^d)^d$ for some even potential $W\in L^2(\T^d)$ with $W_-\in L^\infty(\T^d)$ and $\int_{\T^d}W=0$. Let $F_N$ be a bounded weak solution of~\eqref{eq:Liouville} with initial data~\eqref{eq:initial} such that~\eqref{eq:conserv} holds. Given~$2\le p<\infty$, provided that $\beta\|W\|_{L^1(\T^d)}\ll_p1$ is sufficiently small (depending on $p$), we have for all $t\ge0$,
\begin{equation}\label{eq:estim-FNeq}
\int_{\Dd^N}\frac{|F_N(t)|^{p}}{|M_\beta^{\otimes N}|^{p-1}}\,\lesssim_p\,N^{\frac p2}.
\end{equation}
The smallness condition on $\beta$ can be dropped if $W$ is the repulsive logarithmic kernel. If the assumption $W\in L^2(\T^d)$ is replaced by $W\in L^{2,\infty}(\T^d)$, then the same estimate holds with an additional factor~$N^{C\beta^2}$.
\end{lem}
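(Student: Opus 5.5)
We will prove~\eqref{eq:estim-FNeq} by changing reference measure from the interacting Gibbs state $M_{N,\beta}$, under which the $L^p$-norm of $F_N/M_{N,\beta}$ is conserved by~\eqref{eq:conserv}, to the product state $M_\beta^{\otimes N}$. The change is paid for through the partition function identity~\eqref{eq:moment-MN} together with Theorem~\ref{th:partition}.

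\emph{Step 1: Hölder inequality.} Write $F_N=(F_N/M_{N,\beta})\,M_{N,\beta}$ and set $G:=M_{N,\beta}/M_\beta^{\otimes N}$. Then
\[
\int_{\Dd^N}\frac{|F_N|^p}{|M_\beta^{\otimes N}|^{p-1}}=\int_{\Dd^N}\Big|\frac{F_N}{M_{N,\beta}}\Big|^pG^{p-1}M_{N,\beta}.
\]
Apply Hölder's inequality with respect to the probability measure $M_{N,\beta}$, with exponents $r=\frac{q}{p}$ and $r'$ for some $q>p$, say $q=2p$. This gives
\[
\int_{\Dd^N}\frac{|F_N|^p}{|M_\beta^{\otimes N}|^{p-1}}\le\Big(\int_{\Dd^N}\frac{|F_N|^{2p}}{M_{N,\beta}^{2p-1}}\Big)^{\frac12}\Big(\int_{\Dd^N}G^{2(p-1)}M_{N,\beta}\Big)^{\frac12}.
\]
The second factor equals $\int_{\Dd^N} M_{N,\beta}^{2p-1}/|M_\beta^{\otimes N}|^{2p-2}$. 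By~\eqref{eq:moment-MN} this is $\bar Z_{N,(2p-1)\beta}/(\bar Z_{N,\beta})^{2p-1}$. Using $\bar Z_{N,\beta}\ge1$ and Theorem~\ref{th:partition}(i), it is bounded uniformly in $N$ once $(2p-1)\beta(\|W\|_{L^1}+\|W_-\|_{L^\infty})\ll1$. This is where the $p$-dependent smallness comes from. In the $L^{2,\infty}$ case, Theorem~\ref{th:partition}(ii) yields instead a factor $N^{C\beta^2}$. For the logarithmic kernel we use the uniform bound of~\cite{2506.22083} at all temperatures.

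\emph{Step 2: conservation.} By~\eqref{eq:conserv} with exponent $2p$, the first factor is bounded by its value at time $t=0$. There $F_N^\circ/M_{N,\beta}=\sum_j f_\circ(z_j)$, so the first factor becomes $\big(\int_{\Dd^N}|\sum_j f_\circ(z_j)|^{2p}M_{N,\beta}\big)^{1/2}$.

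\emph{Step 3: moment bound at equilibrium (main point).} We need
\[
\int_{\Dd^N}\Big|\sum_j f_\circ(z_j)\Big|^{2p}M_{N,\beta}\lesssim N^{p},
\]
which is square-root (CLT-type) scaling. The crude bound $N^{2p}\|f_\circ\|_\infty^{2p}$ is too weak. The plan is to transfer back to the product measure by Hölder once more:
\[
\int_{\Dd^N}\Big|\sum_j f_\circ(z_j)\Big|^{2p}M_{N,\beta}\le\Big(\int_{\Dd^N}\Big|\sum_jf_\circ(z_j)\Big|^{2ps}M_\beta^{\otimes N}\Big)^{\frac1s}\Big(\int_{\Dd^N}\frac{M_{N,\beta}^{s'}}{|M_\beta^{\otimes N}|^{s'-1}}\Big)^{\frac1{s'}},
\]
for some fixed $s>1$, for instance $s=2$. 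The second factor is again a ratio of partition functions, controlled by Theorem~\ref{th:partition}(i) for small $\beta$.

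Under $M_\beta^{\otimes N}$, the variables $f_\circ(z_j)$ are i.i.d., bounded, and centered, since $\int_\Dd f_\circ M_\beta=0$. Rosenthal's (or Marcinkiewicz--Zygmund's) inequality then gives
\[
\int_{\Dd^N}\Big|\sum_j f_\circ(z_j)\Big|^{2ps}M_\beta^{\otimes N}\lesssim_{p}N^{ps}\|f_\circ\|_\infty^{2ps}.
\]
Combining these bounds yields $N^{p}$ for the moment, hence $N^{p/2}$ after the square root in Step 1, which is~\eqref{eq:estim-FNeq}.

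The key point is that centering with respect to the product measure, which the interacting measure does not respect, is handled entirely by this second Hölder step. We therefore expect no real obstacle beyond tracking that all the exponents $\beta'=c_p\beta$ remain in the small-temperature regime. This is the main constraint, and it explains why the smallness of $\beta$ depends on $p$.
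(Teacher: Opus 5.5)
Your proposal is correct and follows essentially the same route as the paper: Cauchy--Schwarz against $M_{N,\beta}$ combined with the conservation estimate~\eqref{eq:conserv} at exponent $2p$, then a second Cauchy--Schwarz that moves the initial $2p$-th moment of $\sum_j f_\circ(z_j)$ to the product measure $M_\beta^{\otimes N}$, where the centering $\int_\Dd f_\circ M_\beta=0$ gives the $N^{p}$ bound. All remaining factors are controlled as partition-function ratios via~\eqref{eq:moment-MN} and Theorem~\ref{th:partition}, or~\cite{2506.22083} in the logarithmic case. The only cosmetic difference is that you bound the i.i.d.\ moment by Rosenthal/Marcinkiewicz--Zygmund in terms of $\|f_\circ\|_{L^\infty}$, whereas the paper expands the power and keeps $\big(\int_\Dd|f_\circ|^{4p}M_\beta\big)^{1/4}$.
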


\begin{proof}
Let $2\le p<\infty$. By the Cauchy-Schwarz inequality and the conservation estimate~\eqref{eq:conserv}, we find for all~$t\ge0$,
\begin{equation*}
\int_{\Dd^N}\frac{|F_N(t)|^{p}}{|M_\beta^{\otimes N}|^{p-1}}
\le\bigg(\int_{\Dd^N}\frac{|F_N^\circ|^{2p}}{|M_{N,\beta}|^{2p-1}}\bigg)^\frac12\bigg(\int_{\Dd^N}\frac{|M_{N,\beta}|^{2p-1}}{|M_\beta^{\otimes N}|^{2p-2}}\bigg)^\frac1{2}.
\end{equation*}
By the choice of initial data, the first factor is bounded by
\begin{eqnarray*}
\bigg(\int_{\Dd^N}\frac{|F_N^\circ|^{2p}}{|M_{N,\beta}|^{2p-1}}\bigg)^\frac12
&=&\bigg(\int_{\Dd^N}\Big|\sum_{i=1}^Nf_\circ(z_i)\Big|^{2p} M_{N,\beta}\bigg)^\frac12\\
&\le&\bigg(\int_{\Dd^N}\Big|\sum_{i=1}^Nf_\circ(z_i)\Big|^{4p}M_\beta^{\otimes N}\bigg)^\frac14\bigg(\int_{\Dd^N}\frac{|M_{N,\beta}|^2}{M_\beta^{\otimes N}}\bigg)^\frac14,
\end{eqnarray*}
and thus, expanding the power and using $\int_\Dd f_\circ M_\beta=0$,
\begin{equation}\label{eq:init-moments}
\bigg(\int_{\Dd^N}\frac{|F_N^\circ|^{2p}}{|M_{N,\beta}|^{2p-1}}\bigg)^\frac12
\lesssim N^\frac p2\bigg(\int_{\Dd}|f_\circ|^{4p}M_\beta\bigg)^\frac14\bigg(\int_{\Dd^N}\frac{|M_{N,\beta}|^2}{M_\beta^{\otimes N}}\bigg)^\frac14.
\end{equation}
Inserting this into the above, and decomposing $M_{N,\beta}=M_\beta^{\otimes N}\bar M_{N,\beta}$ in terms of the spatial Gibbs correction
\begin{equation}\label{eq:Gibbs-spatial}
\bar M_{N,\beta}(x_1,\ldots,x_N):=\bar Z_{N,\beta}^{-1}\exp\Big(-\frac{\beta}{2N}\sum_{i\neq j}^NW(x_i-x_j)\Big)~\in\Pc((\T^d)^N),
\end{equation}
we are led to
\begin{equation*}
\int_{\Dd^N}\frac{|F_N(t)|^{p}}{|M_\beta^{\otimes N}|^{p-1}}
\lesssim N^\frac p2\bigg(\int_{\Dd}|f_\circ|^{4p}M_\beta\bigg)^\frac14\bigg(\int_{(\T^d)^N}|\bar M_{N,\beta}|^{2}\bigg)^{\frac14}\bigg(\int_{(\T^d)^N}|\bar M_{N,\beta}|^{2p-1}\bigg)^{\frac12}.
\end{equation*}
It remains to estimate the last two factors. They amount to ratios of partition functions,
\[\int_{\Dd^N}|\bar M_{N,\beta}|^{q}=\frac{\bar Z_{N,q\beta}}{(\bar Z_{N,\beta})^{q}},\]
and the conclusion then follows from Theorem~\ref{th:partition} --- or from~\cite[Theorem~2.5]{2506.22083} for the repulsive logarithmic case.
\end{proof}

\subsection{Proof of Proposition~\ref{prop:apriori-lincorrel}}\label{sec:prop:apriori-lincorrel}
By definition of correlation functions, cf.~\eqref{eq:HNm-def}, we get by orthogonality
\[\sum_{m=1}^N\binom{N}{m}\int_{\Dd^m}\frac{|H_{N,m}|^2}{M_\beta^{\otimes m}}\,=\,\int_{\Dd^N}\frac{|F_N|^2}{M_\beta^{\otimes N}}.\]
As the right-hand side is $O(N)$ by Lemma~\ref{lem:apriori-Liouv} provided that $\beta\|W\|_{L^1(\T^d)}\ll1$ is sufficiently small, this proves the desired $L^2$ estimate in the statement.
We now turn to higher norms.
For any $p\ge1$, the definition of $H_{N,m}$ in terms of marginals, cf.~\eqref{eq:HNm-marg}, trivially yields
\[\int_{\Dd^m}\frac{|H_{N,m}|^{p}}{|M_\beta^{\otimes m}|^{p-1}}
\lesssim_m\sum_{j=0}^m\int_{\Dd^j}\frac{|F_{N,j}|^{p}}{|M_\beta^{\otimes j}|^{p-1}}
\lesssim_m \int_{\Dd^N}\frac{|F_{N}|^{p}}{|M_\beta^{\otimes N}|^{p-1}},\]
where the last estimate follows from H\"older's inequality.
The right-hand side is $O(N^{\frac p2})$ by Lemma~\ref{lem:apriori-Liouv} provided that $\beta\|W\|_{L^1(\T^d)}\ll_p1$ is sufficiently small (depending on $p$).
Interpolating with the $L^2$ bound yields the conclusion.\qed

\section{Linearized mean field}
We turn to the proof of Theorem~\ref{th:main}.
By the conservation estimate~\eqref{eq:conserv} for $q=1$, we find that~$F_{N,1}$ is uniformly bounded in $L^\infty(\R^+;L^1(\Dd))$ as $N\uparrow\infty$.
By weak compactness, up to a subsequence, we thus find
\[F_{N,1}\to f\quad\text{vaguely in $L^\infty(\R^+;\mathcal M(\Dd))$,}\]
for some limit $f$.
(In case $W\in L^2(\T^d)$, by Proposition~\ref{prop:apriori-lincorrel} with $m=1$, this can be turned into \mbox{weak-*} convergence in $L^\infty(\R^+;L^2(1/M_\beta))$.)
In order to deduce a corresponding convergence for higher marginals, we recall the a priori correlation estimates of Proposition~\ref{prop:apriori-lincorrel}: for $W\in L^{2,\infty}(\T^d)$ and $\beta\|W\|_{L^1(\T^d)}\ll1$ sufficiently small, we find for all $m\ge2$,
\[H_{N,m}\to0\quad\text{strongly in $L^\infty(\R^+;L^2(1/M_\beta^{\otimes m}))$}.\]
Combining this with a cluster expansion, inverting~\eqref{eq:HNm-marg}, we thus get for marginals, along the subsequence,
\begin{multline*}
F_{N,m}=\sum_{j=1}^m\sum_{\sigma\in P_j^{m}}H_{N,j}(z_\sigma)M_\beta^{\otimes m-j}(z_{[m]\setminus\sigma})\\[-4mm]
\to\sum_{j=1}^mf(z_j) M_\beta^{\otimes m-1}(z_{[m]\setminus\{j\}})\quad\text{vaguely in $L^\infty(\R^+;\mathcal M(\Dd^m))$}.
\end{multline*}
In order to prove Theorem~\ref{th:main}, it thus remains to characterize the extracted limit $f$.
For this purpose, we note that the first marginal $F_{N,1}$ satisfies the following exact equation,
\begin{equation*}
\partial_t F_{N,1}+v\cdot\nabla_xF_{N,1}
+\frac{N-1}N\nabla_v M_\beta\cdot (K\ast F_{N,1})
=-\frac{N-1}N\nabla_v\cdot\int_\Dd K(x-x_*) H_{N,2}(z,z_*)\,dz_*.
\end{equation*}
We proceed by passing to the limit in the weak formulation. If we could show that the right-hand side converges to $0$ in the distributional sense, then we could conclude that the limit $f$ is a weak solution of the expected linearized Vlasov equation~\eqref{eq:Vlasov}, and uniqueness would yield the conclusion. It is thus sufficient to prove, for all $t\ge0$,
\begin{equation}\label{eq:todo-vanishHN2}
\int_{\Dd}\Big|\int_\Dd K(x_1-x_2) H_{N,2}(t,z_1,z_2)\,dz_2\Big|\,dz_1\to0.
\end{equation}
%We split the proof into two steps, separately considering the subcritical case $W\in L^2(\T^d)$ and the critical case $W\in L^{2,\infty}(\T^d)$.
%
%\medskip\noindent
%{\bf Step~1:} Subcritical case.\\
%Let $W\in L^2(\T^d)$.
Assume that $W\in L^{2,\infty}(\T^d)$ and $K\in L^p(\T^d)$ for some $p>\frac43$.
Given $\e>0$, provided that $\beta\|W\|_{L^1(\T^d)}\ll_{p,\e}1$ is sufficiently small (depending on $p,\e$), the bound~\eqref{eq:est-HNm-aprioriLp} of Proposition~\ref{prop:apriori-lincorrel} yields
\begin{equation}\label{eq:bounHN2-1}
\int_{\Dd}\Big|\int_\Dd K(x_1-x_2) H_{N,2}(t,z_1,z_2)\,dz_2\Big|\,dz_1
\lesssim \|K\|_{L^{p}(\T^d)}N^{C\beta^2+(1-\frac2{p'})(1+\e)-\frac12}.
\end{equation}
As $p'<4$, note that $1-\frac2{p'}<\frac12$. Choosing $\e$ small enough, and then $\beta$ sufficiently small so that the power of $N$ is negative, the conclusion~\eqref{eq:todo-vanishHN2} follows.\qed

\subsection*{Acknowledgements}
MD acknowledges financial support from the European Union (ERC, PASTIS, Grant Agreement n$^\circ$101075879).\footnote{{Views and opinions expressed are however those of the authors only and do not necessarily reflect those of the European Union or the European Research Council Executive Agency. Neither the European Union nor the granting authority can be held responsible for them.}} PEJ was partially supported by NSF DMS Grant 2508570.

\appendix
\section{Whole-space setting with confinement}\label{app:Rd}
For completeness, we briefly explain how our results easily adapt to the corresponding particle dynamics on the whole space $\R^d$ with a confining potential~$V:\R^d\to\R$. Redefining $\Dd=\R^d\times\R^d$, we now consider the following Liouville equation on $\Dd^N$,
\begin{equation}\label{eq:Liouville-re}
\partial_tF_N
+\sum_{j=1}^Nv_j\cdot\nabla_{x_j}F_N
-\sum_{j=1}^N\nabla V(x_j)\cdot\nabla_{v_j}F_N+\frac1N\sum_{j\ne l}^NK(x_j-x_l)\cdot\nabla_{v_j}F_N=0.
\end{equation}
The associated Gibbs equilibrium reads
\[M_{N,\beta}(z_1,\ldots,z_N):=Z_{N,\beta}^{-1}\exp\bigg(-\frac\beta2\sum_{j=1}^N|v_j|^2-\beta\sum_{j=1}^NV(x_j)-\frac{\beta}{2N}\sum_{j\ne l}^NW(x_j-x_l)\bigg),\]
while the mean-field equilibrium $M_\beta\in\Pc(\Dd)$ is now solution of the
fixed-point equation
\begin{equation}\label{eq:mubeta}
M_\beta(x,v)=Z_\beta^{-1} e^{-\frac\beta2|v|^2-\beta V(x)-\beta W\ast M_\beta(x)}.
\end{equation}
With this redefined notation, we consider the Liouville equation~\eqref{eq:Liouville-re} with initial condition~\eqref{eq:initial}, and Theorem~\ref{th:main} is then adapted as follows. Note that the corresponding linearized Vlasov equation~\eqref{eq:Vlasov-re} below further retains the term involving $K\ast M_\beta$, which no longer vanishes in this case.

\begin{theor}\label{th:main-re}
Let the interaction kernel be given by $K=-\nabla W$, for some even $W\in (L^{2,\infty}+L^\infty)(\R^d)$ with bounded negative part $W_-\in L^\infty(\R^d)$, and assume $K\in (L^p+L^\infty)(\R^d)$ for some~$p>\frac43$. Let the confining potential $V\in L^1_\loc(\R^d)$ satisfy~$\int_{\R^d}e^{-V}<\infty$ and $\inf V>-\infty$.
%Further assume that we are in one of the following two situations:
%\begin{enumerate}[---]
%\item \emph{Subcritical case:} $W\in (L^2+L^\infty)(\R^d)$ and $K\in (L^p+L^\infty)(\R^d)$ for some~$p>\frac43$;
%\item \emph{Critical case:} $W\in (L^{2,\infty}+L^\infty)(\R^d)$ is the repulsive Riesz kernel with exponent $\frac d2$, that is, $W(x)=|x|^{-d/2}$, in space dimension $d>2$.
%\end{enumerate}
Then there exists $\beta_0>0$ such that for any $\beta\in(0,\beta_0)$ the mean-field fixed-point equation~\eqref{eq:mubeta} has a unique solution~$M_\beta\in L^\infty\cap\Pc(\R^d)$ and such that the following holds. In the special case of the repulsive logarithmic kernel $W(x)=-\log(|x|)$, one can take $\beta_0=\infty$.

Let $F_N$ be a global bounded weak solution to~\eqref{eq:Liouville-re} with initial data~\eqref{eq:initial} such that~\eqref{eq:conserv} holds. For every $m\ge1$, we have
\[F_{N,m}\to \sum_{j=1}^m f(t,z_j)\prod_{l:l\ne j}^m M_\beta(z_l)\qquad\text{in the sense of distributions on $\R^+\times\Dd^m$},\]
where $f$ is the unique weak solution of the linearized Vlasov equation
\begin{equation}\label{eq:Vlasov-re}
\left\{\begin{array}{l}
\partial_tf+v\cdot\nabla_xf+(K\ast f)\cdot\nabla_vM_\beta+(K\ast M_\beta)\cdot\nabla_vf=0,\\
f|_{t=0}=f_\circ.
\end{array}\right.
\end{equation}
\end{theor}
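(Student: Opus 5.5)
The proof runs parallel to that of Theorem~\ref{th:main}, with two new ingredients. One is existence and uniqueness of the mean-field equilibrium $M_\beta$ solving~\eqref{eq:mubeta}. The other is a whole-space version of the partition function bounds of Section~\ref{sec:partition}, now taken relative to $M_\beta^{\otimes N}$.

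\textbf{Mean-field equilibrium.} Since $M_\beta$ factorizes as a Maxwellian in $v$ times a spatial density $\rho_\beta$, I would solve the fixed point
\[\rho=\Phi(\rho):=\frac{e^{-\beta V-\beta W\ast\rho}}{\int e^{-\beta V-\beta W\ast\rho}}\]
on probability densities in $L^1\cap L^\infty$.
\begin{enumerate}[(i)]
\item For $W=W_1+W_2$ with $W_1\in L^{2,\infty}$ and $W_2\in L^\infty$, the bound $W_-\in L^\infty$ together with $\inf V>-\infty$ and $\int e^{-V}<\infty$ gives an a priori bound on $\|\Phi(\rho)\|_{L^\infty}$.
\item A contraction estimate follows from $\|W\ast(\rho-\rho')\|_{L^\infty}\lesssim\|W\|_{L^{2,\infty}+L^\infty}\|\rho-\rho'\|_{L^1\cap L^{2}}$. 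This yields a contraction for $\beta$ small, and hence existence and uniqueness.
\item In the repulsive logarithmic case, I would instead use strict convexity of the free energy $\int\rho\log\rho+\beta\int V\rho+\frac\beta2\iint W\rho\rho$, which holds since $\widehat W\ge0$. This gives a unique minimizer, which solves~\eqref{eq:mubeta}.
\end{enumerate}

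\textbf{Partition function bound.} The key reduction is to write
\[M_{N,\beta}=M_\beta^{\otimes N}\bar M_{N,\beta},\qquad \bar M_{N,\beta}\propto\exp\Big(-\frac\beta{2N}\sum_{i\ne j}\widetilde W(x_i,x_j)\Big),\]
where $\widetilde W(x,y)=W(x-y)-W\ast\rho_\beta(x)-W\ast\rho_\beta(y)+\iint W\rho_\beta\rho_\beta$ is the centered kernel relative to $\rho_\beta$. This is an exact identity up to constants. It holds because $\sum_{i\ne j}(W\ast\rho_\beta(x_i)+W\ast\rho_\beta(y_j))=2(N-1)\sum_i W\ast\rho_\beta(x_i)$, and the discrepancy $\frac{\beta}{N}\sum_iW\ast\rho_\beta(x_i)$ is bounded because $W\ast\rho_\beta\in L^\infty$.

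The cluster expansion of Section~\ref{sec:L2case} then goes through verbatim on the probability space $(\R^d,\rho_\beta)$ instead of $(\T^d,dx)$:
\begin{enumerate}[(i)]
\item Replace $\int h=0$ by $\int h(x,y)\rho_\beta(y)dy=0$, which kills the bare tree contributions.
\item Bound the cycle traces by the Hilbert--Schmidt norm $\|\widetilde W\|_{L^2(\rho_\beta\otimes\rho_\beta)}$, and integrate leaves using $\sup_x\int|\widetilde W(x,y)|\rho_\beta(y)dy$.
\end{enumerate}
Both quantities are finite for $W\in L^2+L^\infty$, since $\rho_\beta\in L^\infty$. For the weak-$L^2$ part, the interpolation of Section~\ref{sec:noL2case} applies unchanged and produces the factor $N^{C\beta^2}$. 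The logarithmic case with arbitrary $\beta$ would rely on the confined analogue of~\cite{2506.22083}. I expect this step, and in particular the non-translation-invariant Step~1 convexity bound~\eqref{eq:bound-naive}, to be the main obstacle. For that bound I would redo the iteration with $\int e^{\beta\frac nN\widetilde W_-(x_i,y)}\rho_\beta(dy)$, using $\widetilde W_-\le W_-+2\|W\ast\rho_\beta\|_{L^\infty}$.

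\textbf{Conclusion.} With these bounds, Lemma~\ref{lem:apriori-Liouv} and Proposition~\ref{prop:apriori-lincorrel} hold verbatim, since they only use~\eqref{eq:conserv}, orthogonality in $L^2(1/M_\beta^{\otimes m})$, and the moment identity~\eqref{eq:moment-MN}. One then extracts a limit $f$ of $F_{N,1}$ and obtains the structure of $F_{N,m}$ from the decay $H_{N,m}\to0$ for $m\ge2$.

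The BBGKY equation for $F_{N,1}$ now reads
\[\partial_t F_{N,1}+v\cdot\nabla_xF_{N,1}-\nabla V\cdot\nabla_vF_{N,1}+\tfrac{N-1}N\nabla_v\cdot\Big(K\ast F_{N,1}\,M_\beta+(K\ast M_\beta)F_{N,1}\Big)=-\tfrac{N-1}N\nabla_v\cdot\int K(x-x_*)H_{N,2}\,dz_*.\]
The transport and confinement terms $v\cdot\nabla_x$ and $-\nabla V\cdot\nabla_v$ are linear and pass to the limit. The term $(K\ast M_\beta)F_{N,1}$ converges because $K\ast\rho_\beta\in L^\infty_{\rm loc}$ for $K\in L^p+L^\infty$ with $\rho_\beta\in L^1\cap L^\infty$, together with weak-$L^2(1/M_\beta)$ convergence. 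The right-hand side vanishes exactly as in~\eqref{eq:bounHN2-1}, since $p>\frac43$.

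To identify $f$, I would show uniqueness of weak solutions of~\eqref{eq:Vlasov-re} in $L^\infty_tL^2(1/M_\beta)$ by an energy estimate. The operator $v\cdot\nabla_x-\nabla V\cdot\nabla_v-(K\ast M_\beta)\cdot\nabla_v$ is skew in $L^2(1/M_\beta)$ because $M_\beta$ is stationary, and the term $(K\ast f)\cdot\nabla_vM_\beta$ is handled by the modified energy $\int|f|^2/M_\beta+\beta\iint W\rho_f\rho_f$. Alternatively, one can do a Gronwall argument using $\|K\ast\rho_f\|_{L^2(\rho_\beta)}\lesssim\|f\|_{L^2(1/M_\beta)}$.
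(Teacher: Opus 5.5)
Your architecture matches the paper's. You use a small-$\beta$ contraction for the spatial density $\rho_\beta$, with strict convexity for the log gas. You pass to the $\rho_\beta$-centered kernel, with the bounded discrepancy factor $\exp(\frac\beta N\sum_iW\ast\rho_\beta(x_i))$. You bound the modified partition function by cluster expansion plus the interpolation of Section~\ref{sec:noL2case}, using~\cite{2506.22083} in the log case. You then rerun Lemma~\ref{lem:apriori-Liouv}, Proposition~\ref{prop:apriori-lincorrel} and the BBGKY argument.

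There are two smaller slips. First, the inequality $\|W\ast(\rho-\rho')\|_{L^\infty}\lesssim\|W\|_{L^{2,\infty}+L^\infty}\|\rho-\rho'\|_{L^1\cap L^2}$ is false, since $L^{2,\infty}\ast L^2\not\subset L^\infty$: the endpoint of Young's inequality fails, and one needs $\rho-\rho'\in L^{2,1}$, e.g.\ in $L^1\cap L^\infty$. The paper avoids any pointwise bound on $W\ast(\rho-\rho')$ by contracting in $L^1$ alone. Jensen with respect to $\eta_\beta=e^{-\beta V}/\int e^{-\beta V}$ bounds the normalization from below. Then $\int|W\ast(\rho-\rho')|\,\eta_\beta\le\|\,|W|\ast\eta_\beta\|_{L^\infty}\|\rho-\rho'\|_{L^1}$ only requires $W\in L^1+L^\infty$ and $\sup_{\beta\le1}\|\eta_\beta\|_{L^\infty}<\infty$; the latter is also what your a priori bound in (i) needs to be uniform in small $\beta$. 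Second, without translation invariance the torus renormalization cannot make $\int h(x,y)\rho_\beta(y)\,dy=0$ exactly, because the analogue of $c_0$ now depends on $x$. This is harmless: the centering of $\widetilde W$ gives $\sup_x|\int(e^{-\frac\beta N\widetilde W(x,y)}-1)\rho_\beta(y)\,dy|\lesssim\beta^2N^{-2}$, which already makes the bare-tree terms summable.

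The genuine gap is in the identification step, where you use two facts that only hold on the torus. There $K\ast M_\beta=0$, and $F_{N,0}:=\int F_N=N\int f_\circ M_{N,\beta,1}=0$ because the one-particle Gibbs marginal $M_{N,\beta,1}$ is exactly $M_\beta$. Under confinement $M_{N,\beta,1}\ne M_\beta$ (the difference is typically of order $N^{-1}$), so the conserved mass $F_{N,0}=N\int f_\circ(M_{N,\beta,1}-M_\beta)$ need not vanish. Since $F_{N,2}=H_{N,2}+F_{N,1}\otimes M_\beta+M_\beta\otimes F_{N,1}-F_{N,0}\,M_\beta\otimes M_\beta$, two things go wrong:
\begin{itemize}
\item your BBGKY identity is missing the term $-\frac{N-1}NF_{N,0}\,(K\ast M_\beta)\cdot\nabla_vM_\beta$ on the left-hand side;
\item inverting~\eqref{eq:HNm-marg} produces an extra $j=0$ term $F_{N,0}M_\beta^{\otimes m}$.
\end{itemize}
So if $F_{N,0}\to c_0\ne0$, the limit of $F_{N,1}$ carries the additional term $-c_0(K\ast M_\beta)\cdot\nabla_vM_\beta$ and does not solve~\eqref{eq:Vlasov-re}, and the marginals are not of the claimed form. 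The paper's remark that the torus proof carries over verbatim does not address this either.

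The repair is to pass to the limit in $H_{N,1}=F_{N,1}-F_{N,0}M_\beta$. Since $M_\beta$ is stationary for $v\cdot\nabla_x-\nabla V\cdot\nabla_v+(K\ast M_\beta)\cdot\nabla_v$, $H_{N,1}$ satisfies your equation with $F_{N,1}$ replaced by $H_{N,1}$, up to the extra source $\frac1NF_{N,0}\,(K\ast M_\beta)\cdot\nabla_vM_\beta$. This source vanishes in the limit, because $|F_{N,0}|^2\le\int|F_N|^2/M_\beta^{\otimes N}\lesssim N^{1+C\beta^2}$ (the $m=0$ term of the orthogonal decomposition). The limit $h$ of $H_{N,1}$ then solves the linearized equation, including the confinement term $-\nabla V\cdot\nabla_vh$ that you rightly keep and the printed~\eqref{eq:Vlasov-re} omits. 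Correspondingly, $F_{N,m}\to c_0M_\beta^{\otimes m}+\sum_jh(t,z_j)\prod_{l\ne j}M_\beta(z_l)$.
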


\subsection{Well-posedness of mean-field fixed-point equation}
For completeness, we first quickly argue that under the assumptions of Theorem~\ref{th:main-re} the fixed-point equation~\eqref{eq:mubeta} defining the mean-field equilibrium $M_\beta$ is indeed well-posed, as claimed in the statement. In case of the repulsive logarithmic interaction, the corresponding well-posedness statement holds for all $\beta<\infty$ and follows instead from the strict convexity of the free energy.

\begin{lem}
Let $W\in (L^1+L^\infty)(\R^d)$ be even with $W_-\in L^\infty(\R^d)$, and let $V\in L^1_\loc(\R^d)$ satisfy~$\int_{\R^d}e^{-V}<\infty$ and $\inf V>-\infty$.
Then, for $\beta>0$ small enough, the mean-field fixed-point equation~\eqref{eq:mubeta} has a unique solution~$M_\beta\in L^\infty\cap\Pc(\R^d)$.
\end{lem}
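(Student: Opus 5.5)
We will construct $M_\beta$ as the fixed point of a contraction acting on the spatial density, since velocities factor out. Integrating~\eqref{eq:mubeta} in $v$, the spatial density $\rho:=\int_{\R^d}M_\beta(\cdot,v)\,dv$ must satisfy
\[\rho=\Phi(\rho):=\frac{e^{-\beta V-\beta W\ast\rho}}{\int_{\R^d}e^{-\beta V-\beta W\ast\rho}},\]
and conversely any fixed point $\rho$ of $\Phi$ gives a solution $M_\beta=\rho\otimes Z^{-1}e^{-\frac\beta2|v|^2}$. So it suffices to show that $\Phi$ has a unique fixed point in a suitable closed set of $L^1\cap L^\infty\cap\Pc(\R^d)$. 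We split $W=W_1+W_2$ with $W_1\in L^1$ and $W_2\in L^\infty$. Then for every probability density $\rho$,
\[\|W\ast\rho\|_{L^\infty}\le\|W_1\|_{L^1}\|\rho\|_{L^\infty}+\|W_2\|_{L^\infty},\]
and independently $W\ast\rho\ge-\|W_-\|_{L^\infty}$.

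\textbf{Invariant set.} We take
\[\mathcal B:=\big\{\rho\in\Pc(\R^d):\ \|\rho\|_{L^\infty}\le A\big\}.\]
For $\rho\in\mathcal B$, the numerator of $\Phi(\rho)$ is at most $e^{-\beta\inf V+\beta\|W_-\|_{L^\infty}}$. For the denominator, we use $W\ast\rho\le\|W_1\|_{L^1}A+\|W_2\|_{L^\infty}$ to get
\[\int_{\R^d}e^{-\beta V-\beta W\ast\rho}\ge e^{-\beta(\|W_1\|_{L^1}A+\|W_2\|_{L^\infty})}\int_{\R^d}e^{-\beta V}.\]
Here is the first subtle point: $\int e^{-\beta V}$ must be finite and bounded below for small $\beta$. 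The assumption $\int e^{-V}<\infty$ only gives integrability at $\beta=1$, so for $\beta<1$ we need $\int e^{-\beta V}<\infty$. I would handle this by reading the scaling so that the confinement is effectively $\beta V$ with $\int e^{-\beta V}<\infty$ for the relevant $\beta$, or by adding that as a mild assumption. With this in hand, $\int e^{-\beta V}\ge c_\beta>0$ trivially, and choosing $A$ large relative to $c_\beta^{-1}e^{-\beta\inf V}$ and then $\beta\|W_1\|_{L^1}A\le1$ makes $\Phi(\mathcal B)\subset\mathcal B$. Since $A$ may depend on $\beta$, this has to be checked carefully. A cleaner route is to impose smallness on the product $\beta A$.

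\textbf{Contraction.} We use the $L^1$ metric on $\mathcal B$, which is complete as a closed subset. Writing $\Phi(\rho)=g_\rho/\int g_\rho$ with $g_\rho=e^{-\beta V-\beta W\ast\rho}$, the inequality $|e^{-a}-e^{-b}|\le|a-b|\,e^{-\min(a,b)}$ together with the lower bound on $W\ast\rho$ gives
\[|g_\rho-g_\sigma|\le\beta\,\big|W\ast(\rho-\sigma)\big|\,e^{-\beta V+\beta\|W_-\|_{L^\infty}}.\]
Then $\|W\ast(\rho-\sigma)\|_{L^\infty}\le\|W_1\|_{L^1}\|\rho-\sigma\|_{L^\infty}+\|W_2\|_{L^\infty}\|\rho-\sigma\|_{L^1}$. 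The $L^\infty$ term is awkward in an $L^1$ metric, so I would run the contraction in the norm $\|\cdot\|_{L^1}+\|\cdot\|_{L^\infty}$. Both pieces are controlled by $\beta\,C(A,V,W)\,(\|\rho-\sigma\|_{L^1}+\|\rho-\sigma\|_{L^\infty})$, and normalization differences are handled in the same way. Taking $\beta$ small then gives a strict contraction, and Banach's theorem yields existence and uniqueness in $\mathcal B$.

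\textbf{Main obstacle.} Uniqueness in all of $L^\infty\cap\Pc$, and not only in $\mathcal B$, is the main difficulty. Any fixed point satisfies $\|\rho\|_{L^\infty}\le c_\beta^{-1}e^{C\beta}e^{\beta\|W_1\|_{L^1}\|\rho\|_{L^\infty}}$, which does not force $\|\rho\|_{L^\infty}\le A$ for all fixed points. Large-norm solutions might survive. I would first try to exclude them by a bootstrap: the map $s\mapsto c^{-1}e^{\beta\|W_1\|s}$ has two crossings with the diagonal, and the upper one escapes to infinity as $\beta\to0$. If this fails, I would replace the $L^1$ part by using $W_1\ast\rho\ge-\|W_-\|_{L^\infty}$ and bounding $W\ast\rho$ from above through $L^1$-only quantities, after truncating $W_1$ so that it becomes small in $L^1$ with a bounded remainder absorbed into $W_2$.
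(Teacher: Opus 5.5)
\textbf{Verdict: there is a genuine gap.} Existence in your ball $\mathcal B$ is essentially fine, but the statement asserts uniqueness among \emph{all} solutions in $L^\infty\cap\Pc(\R^d)$. Your contraction only gives uniqueness inside $\mathcal B$, and neither proposed repair closes this.

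\textbf{Why your repairs fail.} The bootstrap inequality $s\le a\,e^{bs}$, with $s=\|\rho\|_{L^\infty}$ and $b=\beta\|W_1\|_{L^1}$, holds for every sufficiently large $s$. So it cannot exclude a fixed point of large norm, however small $\beta$ is. Truncating $W_1$ has the same problem. Any \emph{pointwise} upper bound on $W_1\ast\rho$ involves a norm of $\rho$ stronger than $L^1$: you again get $\varepsilon\|\rho\|_{L^\infty}$ in the exponent, hence the same self-referential inequality.

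\textbf{The missing idea.} The normalization only needs a lower bound \emph{on average}, not a pointwise bound on $W\ast\rho$. Set $\eta_\beta:=e^{-\beta V}/\int e^{-\beta V}$. Jensen's inequality and evenness of $W$ give
\[\int_{\R^d}e^{-\beta V-\beta W\ast\rho}\ \ge\ \Big(\int_{\R^d}e^{-\beta V}\Big)\exp\Big(-\beta\int_{\R^d}\rho\,(W\ast\eta_\beta)\Big)\ \ge\ e^{-\beta(\|W_1\|_{L^1}\|\eta_\beta\|_{L^\infty}+\|W_2\|_{L^\infty})}\int_{\R^d}e^{-\beta V}.\]
This uses only $\int\rho=1$. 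Hence every fixed point satisfies $\|\rho\|_{L^\infty}\le\|\eta_\beta\|_{L^\infty}e^{C\beta}$. So all solutions lie in your $\mathcal B$ for a suitable $A$, and uniqueness follows.

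\textbf{Comparison with the paper.} With this bound, $\mathcal B$ and the $L^\infty$ part of your norm become unnecessary. In estimating $\Phi(\rho)-\Phi(\sigma)$, integrate $|W\ast(\rho-\sigma)|$ against $\eta_\beta$ instead of taking its supremum, so the $L^\infty$ norm falls on $\eta_\beta$:
\[\int|W_1\ast(\rho-\sigma)|\,\eta_\beta\le\|\eta_\beta\|_{L^\infty}\|W_1\|_{L^1}\|\rho-\sigma\|_{L^1}.\]
This yields an $L^1$ contraction on all of $L^1\cap\Pc(\R^d)$, which is exactly the paper's proof.

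\textbf{Uniformity in $\beta$.} For the constants to stay bounded as $\beta\downarrow0$, you need $\sup_{0<\beta\le1}\|\eta_\beta\|_{L^\infty}<\infty$. You left this open when you wrote that $A$ may depend on $\beta$. The paper gets it from monotonicity of $\beta\mapsto\|\eta_\beta\|_{L^q}$. It also follows directly: normalize $\inf V=0$, which leaves $\eta_\beta$ unchanged; then for $\beta\le1$ you have $e^{-\beta V}\le1$ and $\int e^{-\beta V}\ge\int e^{-V}$.

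\textbf{Your integrability concern is justified.} The hypothesis $\int e^{-V}<\infty$ does not imply $\int e^{-\beta V}<\infty$ for $\beta<1$, and the paper's proof uses the latter tacitly. It is also necessary: if $\rho\in L^\infty\cap\Pc$ is a solution, then $W\ast\rho$ is bounded above, which forces $\int e^{-\beta V}<\infty$. So this is a missing hypothesis in the statement, not a defect of your argument.
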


\begin{proof}
Splitting $M_\beta=\gamma_\beta(v)\bar M_\beta(x)$ in terms of the Maxwellian velocity density $\gamma_\beta(v)=(\frac{\beta}{2\pi})^{\frac d2}e^{-\frac\beta2|v|^2}$, the fixed-point equation~\eqref{eq:mubeta} reads
\begin{equation}\label{eq:rhobeta}
\bar M_\beta=S_\beta(\bar M_\beta):=\frac{e^{-\beta V-\beta W\ast\bar M_\beta}}{\int_{\R^d}e^{-\beta V-\beta W\ast\bar M_\beta}}.
\end{equation}
We split the proof into two steps.

\medskip\noindent
{\bf Step~1:} Proof that
the reference measure
$\eta_\beta :=\frac{e^{-\beta V}}{\int_{\R^d}e^{-\beta V}}$
satisfies
\begin{equation}\label{eq:est-etabeta}
\sup_{0<\beta\le1}\|\eta_\beta\|_{L^\infty(\R^d)}<\infty.
\end{equation}
Given $1\le q<\infty$, starting from $\|\eta_\beta\|_{L^q(\R^d)}^q=\frac{\int_{\R^d}e^{-q\beta V}}{(\int_{\R^d}e^{-\beta V})^q}$,
we find
\begin{eqnarray*}
\partial_\beta\log\|\eta_\beta\|_{L^q(\R^d)}^q
&=&\frac{q\int_{\R^d}Ve^{-\beta V}}{\int_{\R^d}e^{-\beta V}}
-\frac{q\int_{\R^d}Ve^{-p\beta V}}{\int_{\R^d}e^{-p\beta V}}\\
&=&-q\int_{\beta}^{q\beta}\partial_\lambda\Big( \frac{\int_{\R^d}Ve^{-\lambda V}}{\int_{\R^d}e^{-\lambda V}}\Big)d\lambda\\
&=&q\int_{\beta}^{q\beta}\bigg( \frac{\int_{\R^d}V^2e^{-\lambda V}}{\int_{\R^d}e^{-\lambda V}}-\Big(\frac{\int_{\R^d}Ve^{-\lambda V}}{\int_{\R^d}e^{-\lambda V}}\Big)^2\bigg)d\lambda,
\end{eqnarray*}
which is nonnegative by Jensen's inequality. By the resulting monotonicity, we obtain
\[\sup_{0<\beta\le1}\|\eta_\beta\|_{L^q(\R^d)}\le\|\eta_1\|_{L^q(\R^d)}.\]
Letting $q\uparrow\infty$ and recalling the assumptions on $V$, the claim~\eqref{eq:est-etabeta} follows.

\medskip\noindent
{\bf Step~2:} Contraction estimate.\\
Given $\rho,\rho'\in L^1\cap\Pc(\R^d)$, we can decompose
\[S_\beta(\rho)-S_\beta(\rho')
=\frac{e^{-\beta V}(e^{-\beta W\ast\rho}-e^{-\beta W\ast\rho'})}{\int_{\R^d}e^{-\beta V-\beta W\ast\rho}}
-e^{-\beta V-\beta W\ast\rho'}\frac{\int_{\R^d}e^{-\beta V}(e^{-\beta W\ast\rho}-e^{-\beta W\ast\rho'})}{(\int_{\R^d}e^{-\beta V-\beta W\ast\rho})(\int_{\R^d}e^{-\beta V-\beta W\ast\rho'})}.\]
Using the elementary bound $|e^{-t}-e^{-s}|\le e^B|t-s|$ for $t,s\ge-B$, and recalling $W_-\in L^\infty(\R^d)$, we deduce
\[|S_\beta(\rho)-S_\beta(\rho')|
\le \beta e^{C\beta}e^{-\beta V}\bigg(\frac{|W\ast(\rho-\rho')|}{\int_{\R^d}e^{-\beta V-\beta W\ast\rho}}
+\frac{\int_{\R^d}e^{-\beta V}|W\ast(\rho-\rho')|}{(\int_{\R^d}e^{-\beta V-\beta W\ast\rho})(\int_{\R^d}e^{-\beta V-\beta W\ast\rho'})}\bigg),\]
where henceforth $C$ stands for a generic constant only depending on $V,W$ (not on $\beta$).
Now note that Jensen's inequality yields
\begin{equation*}
\int_{\R^d}e^{-\beta V-\beta W\ast\rho}
\ge\exp\Big(-\beta\int_{\R^d}|W\ast\rho|\,\eta_\beta\Big)\int_{\R^d}e^{-\beta V},
\end{equation*}
and thus, recalling the estimate of Step~1 and the assumption $W\in( L^1+L^\infty)(\R^d)$, for $0<\beta\le1$,
\begin{equation*}
\int_{\R^d}e^{-\beta V-\beta W\ast\rho}
\ge e^{-C\beta}\int_{\R^d}e^{-\beta V}.
\end{equation*}
Inserting this into the above yields
\[|S_\beta(\rho)-S_\beta(\rho')|
\le \beta e^{C\beta}e^{-\beta V}\bigg(\frac{|W\ast(\rho-\rho')|}{\int_{\R^d}e^{-\beta V}}
+\frac{\int_{\R^d}e^{-\beta V}|W\ast(\rho-\rho')|}{(\int_{\R^d}e^{-\beta V})^2}\bigg),\]
and thus
\[\|S_\beta(\rho)-S_\beta(\rho')\|_{L^1}
\le 2\beta e^{C\beta}\int_{\R^d}|W\ast(\rho-\rho')|\,\eta_\beta.\]
Using again the estimate of Step~1 and the assumption $W\in(L^1+L^\infty)(\R^d)$, we are led to
\[\|S_\beta(\rho)-S_\beta(\rho')\|_{L^1}
\le C\beta e^{C\beta}\|\rho-\rho'\|_{L^1}.\]
For $\beta<(eC)^{-1}$, the map $S_\beta$ is thus a contraction on $L^1\cap \Pc(\R^d)$. This ensures the existence of a unique solution $\bar M_\beta\in L^1\cap\Pc(\R^d)$ of~\eqref{eq:rhobeta}, and its boundedness easily follows.
\end{proof}

\subsection{Proof of Theorem~\ref{th:main-re}}
The proof of Theorem~\ref{th:main} can be repeated in the present setting with confinement once the a priori estimates of Lemma~\ref{lem:apriori-Liouv} on the Liouville solution are suitably adapted. For this purpose, the key input is again the partition function estimate established in Section~\ref{sec:partition}, which remains valid in the following form.

\begin{theor}[High-temperature bound]
Let $W\in (L^1+L^\infty)(\R^d)$ be even with $W_-\in L^\infty(\R^d)$, let $V\in L^1_\loc(\R^d)$ satisfy $\int_{\R^d} e^{-V}<\infty$ and $\inf V>-\infty$, and assume that $\beta\|W\|_{(L^1+L^\infty)(\T^d)}\ll1$ is sufficiently small. Denote by $\bar M_\beta:=\int_{\R^d}M_\beta(\cdot,v)dv\in L^\infty\cap\Pc(\R^d)$ the spatial equilibrium measure, define the~$\bar M_\beta$-centered interaction kernel
\[W_\beta(x,y)=W(x-y)-W\ast\bar M_\beta(x)-W\ast\bar M_\beta(y)+\iint_{\R^d\times\R^d}W(x-y)\bar M_\beta(x)\bar M_\beta(y)\,dxdy,\]
and the associated partition function
\[\widehat Z_{N,\beta}:=\int_{(\R^d)^N}\exp\Big(-\frac{\beta}{2N}\sum_{j\ne l}^NW_\beta(x_j,x_l)\Big)\,\bar M_\beta^{\otimes N}.\]
Then we have for all $N\ge1$,
\[1\le \widehat Z_{N,\beta}\le \left\{\begin{array}{ll}
\exp(Ce^{C\beta}),&\text{if $W\in(L^{2}+L^\infty)(\R^d)$},\\
C N^{C\beta^2},&\text{if $W\in(L^{2,\infty}+L^\infty)(\R^d)$}.
\end{array}\right.\]
\end{theor}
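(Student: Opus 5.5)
The plan is to repeat the cluster-expansion argument of Section~\ref{sec:L2case}, now on the probability space $(\R^d,\bar M_\beta)$ instead of $(\T^d,dx)$. The role of the centering $\int_{\T^d}W=0$ is played by the double centering of $W_\beta$: we have $\int W_\beta(x,y)\bar M_\beta(y)\,dy=0$ for every $x$, so $W_\beta$ is a canonical (degenerate) kernel with respect to $\bar M_\beta$.

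\textbf{Lower bound.} The bound $\widehat Z_{N,\beta}\ge1$ follows from Jensen's inequality: the $\bar M_\beta^{\otimes N}$-average of $\sum_{j\ne l}W_\beta(x_j,x_l)$ vanishes.

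\textbf{Norm bounds.} Before expanding, I record the bounds on $W_\beta$ that replace those on $W$:
\begin{enumerate}[(a)]
\item $\|(W_\beta)_-\|_{L^\infty}\lesssim \|W_-\|_{L^\infty}+\|W\|_{L^1+L^\infty}$, using $\|W\ast\bar M_\beta\|_{L^\infty}\le \|W\|_{L^1+L^\infty}\max(1,\|\bar M_\beta\|_{L^\infty})$;
\item $\sup_x\int|W_\beta(x,y)|\bar M_\beta(y)\,dy\lesssim\|W\|_{L^1+L^\infty}$;
\item $\iint|W_\beta|^2\bar M_\beta\otimes\bar M_\beta\lesssim \|W\|_{L^2+L^\infty}^2$ when $W\in L^2+L^\infty$.
\end{enumerate}
All three use $\bar M_\beta\in L^\infty\cap\Pc$, which comes from the well-posedness lemma together with Step~1 of its proof. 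The uniform $L^\infty$ control of $\bar M_\beta$ for small $\beta$ is where the factor $e^{C\beta}$ enters.

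\textbf{Cluster expansion.} Set $f(x,y)=e^{-\frac\beta N W_\beta(x,y)}-1$. The function $c_0(x)=\int f(x,y)\bar M_\beta(y)dy$ now depends on $x$, so the clean rescaling $h=(f-c_0)/(1+c_0)$ is no longer available. Instead I will work directly with $f$ and split $f=g+\frac{\beta^2}{N^2}r$, where $g=-\frac\beta N W_\beta$ is exactly $\bar M_\beta$-centered in each variable. The remainder satisfies $|r|\lesssim W_\beta^2 e^{\frac\beta N\|(W_\beta)_-\|_\infty}$; its weighted $L^1$ norm is controlled by (c), and $\sup_x\int|f(x,\cdot)|\bar M_\beta\lesssim \frac\beta N\|W\|_{L^1+L^\infty}$.

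In the Penrose tree-graph expansion of $\log \widehat Z_{N,\beta}$, each tree term whose edges all carry $g$ vanishes, by integrating leaves inward and using the centering. Every surviving term therefore carries either a closing edge $e'$ forming a cycle, or at least one $r$-edge. The cycle case is bounded as before: a cycle gives the trace of an integral operator on $L^2(\bar M_\beta)$, bounded by the Hilbert--Schmidt norm squared times operator norms, i.e.\ by (c) and (b). An $r$-edge directly costs $\beta^2N^{-2}\|W_\beta\|_{L^2(\bar M_\beta^{\otimes 2})}^2$. The remaining branches each cost $\frac\beta N\|W\|_{L^1+L^\infty}$ via (b). Summing with Cayley's formula exactly as in~\eqref{eq:main-varphi} gives $\log\widehat Z_{N,\beta}\lesssim \beta^2\|W\|_{L^2+L^\infty}^2e^{C\beta}$ for $\beta$ small, which yields the first bound.

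\textbf{Main obstacle: bookkeeping of mixed trees.} The step I expect to be hardest is handling trees that mix $g$- and $r$-edges, together with the $(1+f_e)$ factors on $R(T)$. Two points need care:
\begin{enumerate}[(a)]
\item The $(1+f_e)$ factors on $R(T)$ are bounded by $e^{\beta|S|\|(W_\beta)_-\|_\infty}$, exactly as in the torus case.
\item Vanishing requires the tree contribution with all edges $g$ to be isolated. Hence I expand $\prod_{E(T)}(g_e+\frac{\beta^2}{N^2}r_e)$ and estimate each term containing at least one $r$-edge by the same leaf-inward integration.
\end{enumerate}
Counting shows that a single $r$-edge gives a contribution $N^2\cdot N^{-2}$, which is $O(1)$ in total, so the sum closes.

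\textbf{Weak-$L^2$ case.} For $W\in L^{2,\infty}+L^\infty$, I repeat Section~\ref{sec:noL2case}. Truncate $W=U_M+V_M$ at level $M=N$ and apply Cauchy--Schwarz. The $U_M$ part is handled by the $L^2$ bound with $\|U_M\|_{L^2}^2\lesssim\log N$. The $V_M$ part is handled by the convexity bound~\eqref{eq:bound-naive}, adapted to the measure $\bar M_\beta$ (the iteration is unchanged since $\bar M_\beta$ is a product weight) and using $\|V_M\|_{L^1}\lesssim N^{-1}$. Together these give $CN^{C\beta^2}$.
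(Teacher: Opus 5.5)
Your proposal is correct and follows essentially the paper's route. You transplant the cluster-expansion bound of Section~\ref{sec:L2case} to the probability space $(\R^d,\bar M_\beta)$ via the double $\bar M_\beta$-centering of $W_\beta$, then apply the truncation and Cauchy--Schwarz interpolation of Section~\ref{sec:noL2case} for the weak-$L^2$ case; the paper's one-line sketch skips the fact that $c_0$ now depends on $x$, which breaks the renormalization $h=(f-c_0)/(1+c_0)$, and your split $f=g+\frac{\beta^2}{N^2}r$ with an exactly centered linear part (isolating the all-$g$ tree terms) is a sound way to fill in that detail.
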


These estimates are obtained by repeating the argument of Section~\ref{sec:noL2case}, simply using that the centered kernel~$W_\beta$ satisfies the corresponding cancellation property,
\[\int_{\R^d} W_\beta(x,y)\bar M_\beta(x)dx=0,\qquad\int_{\R^d} W_\beta(x,y)\bar M_\beta(y)dy=0.\]
In the case of the repulsive logarithmic kernel, the uniform bound $\sup_{N\ge1}\widehat Z_{N,\beta}<\infty$ is known to extend to all~$\beta<\infty$ by~\cite[Theorem~2.5]{2506.22083}.

To exploit the above estimates on the modified partition function $\widehat Z_{N,\beta}$, we show that it allows again to control the density of the Gibbs equilibrium $M_{N,\beta}$ with respect to the product mean-field equilibrium $M_\beta^{\otimes N}$, in the spirit of~\eqref{eq:moment-MN}. Indeed, using the identity
\[e^{-\frac\beta2|v|^2-\beta V}=Z_\beta M_\beta e^{\beta W\ast M_\beta},\]
a direct computation yields
\begin{eqnarray*}
\int_{\Dd^N}\frac{|M_{N,\beta}|^p}{|M_\beta^{\otimes N}|^{p-1}}
&=&\frac{\int_{(\R^d)^N}\exp\big(-\frac{p\beta}{2N}\sum_{j\ne l}^NW_\beta(x_j,x_l)+\frac{p\beta}{N}\sum_{j=1}^NW\ast \bar M_\beta(x_j)\big)\bar M_\beta^{\otimes N}}{\Big(\int_{(\R^d)^N}\exp\big(-\frac{\beta}{2N}\sum_{j\ne l}^NW_\beta(x_j,x_l)+\frac{\beta}{N}\sum_{j=1}^NW\ast\bar M_\beta(x_j)\big)\bar M_\beta^{\otimes N}\Big)^p}\\
&\le&e^{2p\beta \|W\ast\bar M_\beta\|_{L^\infty}}\frac{\widehat Z_{N,p\beta}}{(\widehat Z_{N,\beta})^p},
\end{eqnarray*}
which can then be controlled using the above partition function estimates.
With this control at hand, the a priori estimate of Lemma~\ref{lem:apriori-Liouv} on the Liouville solution is easily adapted and the proof of Theorem~\ref{th:main} then carry over verbatim.

\bibliographystyle{plain}
\bibliography{biblio}

\begin{thebibliography}{10}

\bibitem{Benarous-Brunaud}
G.~Ben~Arous and M.~Brunaud.
\newblock M\'ethode de {L}aplace: \'etude variationnelle des fluctuations de
  type champ moyen.
\newblock {\em Stochastic}, 31:79--144, 1990.

\bibitem{BGS-17}
T.~Bodineau, I.~Gallagher, and L.~Saint-Raymond.
\newblock From hard sphere dynamics to the {S}tokes-{F}ourier equations: an
  {$L^2$} analysis of the {B}oltzmann-{G}rad limit.
\newblock {\em Ann. PDE}, 3(1):Paper No. 2, 118, 2017.

\bibitem{BDJ-24}
M.~Bresch, D.~Duerinckx and P.-E. Jabin.
\newblock A duality method for mean-field limits with singular interactions.
\newblock Preprint, arXiv:2402.04695.

\bibitem{Brydges}
D.~Brydges.
\newblock A short course on cluster expansions.
\newblock In {\em {Critical Phenomena, Random Systems, Gauge Theories}}, Les
  Houches Lectures. 1984.

\bibitem{2506.22083}
M.~G. Delgadino and R.~S. Gvalani.
\newblock Sharp mean-field estimates for the repulsive log gas in any
  dimension.
\newblock Preprint, arXiv:2506.22083.

\bibitem{DSR-21}
M.~Duerinckx and L.~Saint-Raymond.
\newblock Lenard-{B}alescu correction to mean-field theory.
\newblock {\em Probab. Math. Phys.}, 2(1):27--69, 2021.

\bibitem{Fernandez-Procacci}
R.~Fern{\'a}ndez and A.~Procacci.
\newblock {Cluster Expansion for Abstract Polymer Models. New Bounds from an
  Old Approach}.
\newblock {\em Communications in Mathematical Physics}, 274(1):123--140, 2007.

\bibitem{Gine-Latala-Zinn-00}
E.~Gin{\'e}, R.~Lata{\l}a, and J.~Zinn.
\newblock {Exponential and Moment Inequalities for U-Statistics}.
\newblock In E.~Gin{\'e}, D.~M. Mason, and J.~A. Wellner, editors, {\em High
  Dimensional Probability II}, pages 13--38, Boston, MA, 2000. Birkh{\"a}user
  Boston.

\bibitem{Jansen}
S.~Jansen.
\newblock {Gibbsian point processes}.
\newblock Lecture notes, Winter 2017/18.

\bibitem{Petrache-Serfaty}
M.~Petrache and S.~Serfaty.
\newblock Next order asymptotics and renormalized energy for {R}iesz
  interactions.
\newblock {\em J. Inst. Math. Jussieu}, 16(3):501--569, 2017.

\bibitem{Rosenzweig-pers}
M.~Rosenzweig and S.~Serfaty.
\newblock Personal communication.

\bibitem{Rougerie-Serfaty}
N.~Rougerie and S.~Serfaty.
\newblock {Higher-Dimensional Coulomb Gases and Renormalized Energy
  Functionals}.
\newblock {\em Commun. Pur. Appl. Math.}, 69:519--605, 2016.

\end{thebibliography}

\end{document}